\newtheorem{theorem}{\bf Theorem}[section]
\newtheorem{lemma}{\bf Lemma}[section]
\newtheorem{definition}{\bf Definition}[section]
\newcommand{\N}{\mathbb N}
\numberwithin{equation}{section}
\newcommand{\diver}{\operatorname{div}}
\DeclareMathOperator*{\supp}{supp}
\newcommand{\dert}{\partial_t}
\newcommand{\thetah}{\hat\thet}
\newcommand{\vc}[1]{{\bf #1}}
\newcommand{\vv}{\vc{v}}
\newcommand{\vw}{\vc{w}}
\newcommand{\thet}{\vartheta}
\renewcommand{\S}{\mathbf{S}}
\newcommand{\D}{\mathbf{D}}
\newcommand{\dt}{\,{\rm d} t }
\newcommand{\dx}{\,{\rm d} {x}}
\newcommand{\dxdt}{\dx  \dt}
\newcommand{\G}{\mathcal{G}}
\newcommand{\al}{\alpha}
\newcommand{\intO}[1]{\int_{\Omega} #1 \ \dx}
\newcommand{\intTO}[1]{\int_0^T \int_{\Omega} #1 \ \dxdt}
\newcommand{\intTOI}[1]{\int_0^{\infty} \int_{\Omega} #1 \ \dxdt}
\newcommand{\tmin}{\underline{\thet}}
\newcommand{\tmax}{\overline{\thet}}
\begin{document}

\title[On the exponential decay in time of solutions to a Navier-Stokes-Fourier system]{On the exponential decay in time of solutions to a~generalized Navier-Stokes-Fourier system}
\author[A.~Abbatiello]{Anna Abbatiello}
\address{Anna Abbatiello.
Sapienza University of Rome, Department of Mathematics ``G. Castelnuovo", Piazzale Aldo Moro 5, 00185 Rome, Italy.}   \email{\tt anna.abbatiello@uniroma1.it}

\author[M.~Bul\'{i}\v{c}ek]{Miroslav Bul\'i\v{c}ek}
\address{Miroslav Bul\'i\v{c}ek. Mathematical Institute, Faculty of Mathematics and Physics, Charles University, Sokolovsk\'{a} 83, 18675 Prague, Czech Republic.}
\email{\tt mbul8060@karlin.mff.cuni.cz}

\author[P.~Kaplick\'{y}]{Petr Kaplick\'{y}}
\address{Petr Kaplick\'{y}. Charles University, Faculty of Mathematics and Physics, Department of Mathematical Analysis, Sokolovsk\'{a} 83, 18675 Prague, Czech Republic.}
 \email{\tt kaplicky@karlin.mff.cuni.cz}


\keywords{Navier--Stokes--Fourier equations, stability, equilibrium, non-Newtonian fluids}
\subjclass[2020]{35Q30, 35K61, 76E30, 37L15}
\thanks{A.~Abbatiello has been supported by the ERC-STG Grant n. 759229 HiCoS. Also, the research activity of A.~Abbatiello is performed under the auspices of GNAMPA - INdAM. M. Bul\'{\i}\v{c}ek and P. Kaplick\'{y} acknowledge the support of the project  No. 20-11027X financed by Czech Science Foundation (GA\v{C}R)}

\begin{abstract}
We consider a non-Newtonian incompressible heat conducting fluid with prescribed nonuniform temperature on the boundary and with the no-slip boundary conditions for the velocity. We assume no external body forces. For the power-law like models with the power law index bigger than $11/5$ in three dimensions, we identify a class of solutions fulfilling the entropy equality and converging to the equilibria exponentially in a proper metric. In fact, we show the existence of a Lyapunov functional for the problem. Consequently, the steady solution is nonlinearly stable and attracts all suitable weak solutions.
\end{abstract}

\maketitle


\section{Formulation of the problem and the main result}
We consider flows of an incompressible heat conducting non-Newtonian fluid in a three dimensional Lipschitz domain $\Omega\subset \mathbb{R}^3$ subjected to the homogeneous Dirichlet boundary condition for the velocity field and to the spatially inhomogeneous Dirichlet boundary condition for the temperature, i.e., we do not consider the transfer of mass through the boundary but we allow the transfer of the heat/energy. Our principal goal is to show that for a reasonable class of models, the flow converges to the unique stationary solution provided that the external body forces are equal to zero. Moreover, we want to have a proper rate of the convergence and also if possible to show a kind of stability of the equilibria.

To be more precise, we consider a couple $(\vv, \thet)$ that solves the following system of equations
\begin{align}
\dert \vv + \diver(\vv \otimes \vv) -\diver \S(\thet,\D \vv) &= - \nabla p,\label{MB1}\\
\diver \vv &= 0,\label{MB2}\\
\dert \thet + \diver (\vv \thet) - \diver (\kappa(\thet)\nabla \thet) &= \S(\thet,\D\vv) : \D \vv \label{MB3}
\end{align}
in $Q:=(0,\infty)\times \Omega$. The system \eqref{MB1}--\eqref{MB3} is completed by the initial conditions
\begin{equation}\label{MB4}
\vv(0, \cdot)=\vv_0, \quad \thet(0, \cdot)=\thet_0 \qquad \textrm{ in } \Omega
\end{equation}
and by the boundary conditions
\begin{equation}
\label{MB5}
\vv=\vc{0}, \quad \thet=\thetah   \qquad \textrm{ on } (0, \infty)\times \partial \Omega.
\end{equation}
Here, $\vv$ is the velocity field, $p$ is the pressure and $\thet$ is the temperature. The material is described by the constitutively determined part of the Cauchy stress $\S$, that is supposed to be a tensorial function of the temperature $\thet$ and the symmetric velocity gradient $\D\vv:=(\nabla \vv +(\nabla \vv)^T)/2$, i.e., $\S:\mathbb{R}_+ \times \mathbb{R}^{3\times 3}_{\textrm{sym}} \to \mathbb{R}^{3\times 3}_{\textrm{sym}}$, by the heat conductivity $\kappa:\mathbb{R}_+ \to \mathbb{R}_+$, and by the heat capacity. For simplicity and for the clarity of the presentation we consider the heat capacity being equal to one.

Further, we assume that $\kappa$ and $\S$ are continuous mappings and that there exist constants $0<\underline{\kappa}<\overline{\kappa}<\infty$, parameter $p\in (1,\infty)$ and $\delta\in (0,1]$ such that for all $(\thet,\D)\in\mathbb{R}_+\times\mathbb{R}^{3\times 3}_{\textrm{sym}}$
\begin{equation}
\label{MBBounds}
\begin{gathered}
\underline{\kappa}\le \kappa(\thet)\le \overline{\kappa}, \qquad |\S(\thet,\D)|\le \overline{\kappa}(\delta + |\D|)^{p-2},\\
\underline{\kappa}(\delta + |\D|)^{p-2}|\D|^2 \le \S(\thet,\D):\D.
\end{gathered}
\end{equation}

For the \emph{stationary} boundary data $\thetah$ we consider, that it is extended inside $\Omega$ such that $0<\tmin\le \thetah \le \tmax$ in $\Omega$ for some $\tmin,\tmax\in\mathbb{R}_+$ and it solves
\begin{equation}\label{MB6}
-\diver(\kappa(\thetah)\nabla \thetah)=0\quad\mbox{in $\Omega$.}
\end{equation}
The initial conditions are considered to have the finite energy, i.e.,
\begin{equation}\label{MB7}
\intO{\frac12|\vv_0|^2 + \thet_0} <\infty
\end{equation}
and to satisfy the standard compatibility conditions
\begin{equation}\label{MB7.5}
\diver \vv_0 =0, \; \thet_0 >\tmin \textrm{ in } \Omega \qquad \textrm{ and } \qquad \vv_0  \cdot \vc{n} = 0 \textrm{ on } \partial\Omega,
\end{equation}
where $\vc{n}$ denotes the unit outward normal vector on $\partial \Omega$. Note here, that \eqref{MB1} is the incompressibility constraint, \eqref{MB2} is the balance of linear momentum and \eqref{MB3} is the balance of internal energy. The balance of internal energy is sometimes completed or replaced by the entropy (in)equality
\begin{equation}\label{MB8}
\dert \eta + \diver (\vv \eta) - \diver (\kappa(\thet)\nabla \eta ) = \frac{\S(\thet,\D\vv) : \D \vv}{\thet} + \kappa(\thet)|\nabla \eta|^2,
\end{equation}
where $\eta:=\ln \thet$ denotes the entropy. Note that \eqref{MB3} and \eqref{MB8} are formally equivalent. However, such equivalence holds true only if the solution satisfies certain regularity properties. In this paper, we select certain uniform integrability criteria, see \eqref{limit-k}, that will guarantee such equivalence.
Solutions satisfying this condition will be called \emph{proper} solutions.
The existence of solutions satisfying \eqref{MB1}--\eqref{MB3} and \eqref{MB8} for any initial data fulfilling \eqref{MB7}--\eqref{MB7.5} is established in~\cite{ABK} provided that $\S$ is monotone with respect to $\D$ and\footnote{The existence of a weak solution for $p<11/5$ is studied e.g. in \cite{BuMaRa09,MaZa18} with one proviso, the entropy equality~\eqref{MB8} and the internal energy equality~\eqref{MB3} are replaced by inequalities and the system is completed by the balance of the global energy. For $p\ge 11/5$, see also e.g. \cite{Cons}.} $p\ge 11/5$ in three dimensions. The main result of the paper can be summarized as follows:

\bigskip

\newtheorem*{theorem*}{Main theorem}

\begin{theorem*}Let $\S$ and $\kappa$ satisfy \eqref{MBBounds} with $p\ge 11/5$. Then there exists $\lambda>0$ and a~function $(\vv,\thet,\thetah)\mapsto L(\vv,\thet,\thetah)$ that is nonnegative, strictly convex with respect to $\vv$ and $\thet$ and vanishing only if $\vv=0$ and $\thet=\thetah$ such that for all $\tau>\sigma\ge 0$ and for all proper solutions to \eqref{MB1}--\eqref{MB3} and \eqref{MB8} there holds
\begin{equation}\label{MB-res}
\intO{L(\vv(\tau),\thet(\tau),\thetah)}\le e^{\lambda (\sigma-\tau)}\intO{L(\vv(\sigma),\thet(\sigma),\thetah) }.
\end{equation}
\end{theorem*}

\bigskip

We formulate the above statement more precisely in the next section. Let us now comment the importance of the result. First, the function $L(\vv,\thet,\thetah)$ measures in a ``convex way" the distance of $(\vv,\thet)$ and $(\vc{0},\thetah)$, hence \eqref{MB-res} gives the exponential decay of such distance. Second, the relation \eqref{MB-res} shows that such distance is \emph{always decreasing}
exponentially to zero, which means that the steady solution is \emph{nonlinearly}\footnote{It means that all solutions are immediately attracted and not only the solutions being a~priori close to the steady one.} stable.

We would like to point out that the nonconstant heat capacity can be easily included and then the result holds as well, see Appendix~\ref{App:A}.
Further, a certain decay property can be observed also if $\delta$ appearing in \eqref{MBBounds} is equal to zero. Indeed, in such case we would need to replace $e^{-\lambda(\sigma-\tau)}$ by $(1+(\tau-\sigma))^{-\alpha}$ for some $\alpha>0$, see Appendix~\ref{App:B}.

The stability problem in the context of continuum thermodynamics belongs among very difficult problems, where not so much is known in case the fluid is heat conducting with material parameters depending on the temperature (which is the most typical case). Up-to-date the best result was obtained in \cite{DosPruRaj}, where the authors showed that for some nonnegative $H(\vv,\thet, \thetah)$ vanishing only for $\vv\equiv 0$ and $\thet=\thetah$,
$$
\intO{H(\vv(\tau),\thet(\tau), \thetah)} \to 0 \quad \textrm{ as } \quad \tau\to \infty,
$$
provided that the solution is sufficiently regular (at least strong solution) and the heat conductivity is constant. Hence, compared with this result, we obtained several significant improvements.

First, we do not require the regularity of the solution (note that in three dimensional setting the existence of a strong solution is not known), second, we are able to incorporate a general (nonconstant) heat conductivity and consequently also nonconstant heat capacity, last, we are able to provide the exponential decay and in fact the nonlinear stability result, which is not the case of \cite{DosPruRaj}. We would like to point out that our result similarly as the result in \cite{DosPruRaj} are inspired by the observation made already in \cite{BuMaPr19}, where much simplified problem was treated by using the energy-entropy relations. Note that such energy-entropy relations were already used in \cite{Dafermos} to study the stability problems. Furthermore, construction of specific energy like functionals inspired by energy-entropy relations was recently used for the so-called weak-strong uniqueness result for the fluid flow, see e.g. \cite{Fe1,Fe2}. However, all the above mentioned works deal with strong or classical solutions, or at least assume their existence. Note that in three dimensional setting, for nonlinear models such results are not available even for\footnote{Or equivalently, whenever the so-called energy equality holds.} $p\ge 11/5$. Last, but not least, the uniqueness results stated e.g.  in \cite{Fe1,Fe2} does not imply anything for the stability of the solution. It is important to emphasize that, contrary to above mentioned results, we are able to find and tune a~Lyapunov functional~$L$ that reflects the nonlinear nature of the problem and for which we just need to use weak solutions provided that $p\ge 11/5$. Note, that our result can be very straightforwardly extended also to two dimensional setting for $p\ge 2$.

There is also a second aspect of our result. There is a physical and also mathematical evidence that the incompressible Navier--Stokes--Fourier system should not be used as an adequate description of the fluid flow in case that the temperature gradient is large, e.g., in case that the boundary data are far from being constant. A canonical example is the problem how to describe the Rayleigh--B\'{e}nard convection, i.e., the instability in the fluid flow between to horizontal layers with prescribed large temperature gradients. The most typical model used in the physical and also mathematical community is the so-called Oberbeck--Boussinesq approximation, see e.g. \cite{RaRuSr96} for a class of potential models proper for description of the thermal convection. Furthermore, recent analytical results also suggests that the original Oberbeck--Boussinesq model must be accompanied by the proper boundary conditions, see \cite{BeFeOs22} or \cite{AbFe22} for the existence analysis. Note that the derivation of the Oberbeck--Boussinesq like approximations are done usually only formally and heuristically for classical solutions, or on a rigorous mathematical basis as in \cite{BeFeOs22} or \cite{AbFe22} but provided that the we do have a strong solution, which is not the case for long time intervals. However, noting like that was shown on the level of weak solutions, which is the only object whose existence is guaranteed. Therefore, it was not a~priori excluded, although not believed, that the instability may be a consequence of some potential singularity, which may appear due to the fact we deal only weak solutions. Hence, our result proves, that even on the level of weak solution the generalized Navier--Stokes--Fourier system is stable provided $p\ge 11/5$ in three dimensional setting and $p\ge 2$ in two dimensions, respectively.


In Section~\ref{S2} we formulate our result more precisely with all necessary technical details and in Section~\ref{S3} we provide the full rigorous proof.

\section{Rigorous statement of the result}\label{S2}
We denote $\mathbb{R}_+:=(0,+\infty)$ and $\mathbb{R}^{3\times 3}_{\textrm{sym}}$ be the set of symmetric $3\times 3$ matrices. We also use the standard notation for Lebesgue, Sobolev and Bochner spaces endowed with their classical norms. The symbols ``$\cdot$" and ``$:$" denote the scalar product in $\mathbb{R}^3$ and in $\mathbb{R}^{3\times 3}$. To shorten the notation, we also set
$$
W^{1,p}_{0,\diver}:=\overline{\left\{\vv \in \mathcal{C}^{\infty}_0(\Omega; \mathbb{R}^3); \, \diver \vv =0\right\}}^{\|\, \|_{1,p}} \qquad \textrm{ and } \qquad  L^2_{0,\diver}:=\overline{W^{1,2}_{0,\diver}}^{\|\, \|_2}
$$
for arbitrary $p\in [1,\infty)$. We also use the abbreviation $\fint$ for the mean value integral, i.e., $\fint_I := \frac{1}{|I|}\int_I$. Furthermore, the symbol $\S$ will be always understood as $\S(\thet, \D\vv)$ and $\eta$ always denotes $\ln \thet$ or vice versa, if $\eta$ is given then $\thet:= e^{\eta}$. Similarly, we set $\hat{\eta}:= \ln \thetah$. Also, in what follows we consider only the case $\delta:=1$ for simplicity. The case when $\delta>0$ can be done qualitatively similarly and one just has to trace the dependence of constants on the value of $\delta$. The case $\delta=0$ must be treated differently and we refer the interested reader to Appendix~\ref{App:B}.

\subsection{Class of proper solutions}
In this paper we consider only the class of \emph{proper} solutions to \eqref{MB1}--\eqref{MB5} and \eqref{MB8}.

\begin{definition}\label{proper-sol}
Let $\Omega \subset \mathbb{R}^3$ be the Lipschitz set. Assume that the conditions~\eqref{MBBounds} are satisfied and that $\thetah\in W^{1,2}(\Omega)$ fulfills $\tmin\le \thetah \le \tmax$ for some $0<\tmin\leq\tmax$ and solve \eqref{MB6} in a weak sense, i.e.,
\begin{equation}\label{thetahdf}
\intO{\kappa(\thetah)\nabla \thetah \cdot \nabla \varphi}=0 \qquad \textrm{ for all } \varphi \in W^{1,2}_0(\Omega).
\end{equation}
Let the initial data $\vv_0$ and $\thet_0$ be given and fulfill $\vv_0\in L^2_{0,\diver}$, $\thet_0\in L^1(\Omega)$ and  $\thet_0 \ge \tmin$ almost everywhere in $\Omega$. We say that $(\vv, \S, \thet, \eta)$ is a \emph{proper} solution if for all $T>0$:
\begin{itemize}[leftmargin=*]
\item we have
\begin{align}\label{MBap}
\begin{aligned}
\vv &\in \mathcal{C}([0,T]; L^2_{0,\diver}) \cap L^p(0,T; W^{1,p}_{0,\diver}),\quad \partial_t\vv \cap L^{p'}(0,T; (W^{1,p}_{0,\diver})^*),\\
\S &\in L^{p'}((0,T)\times \Omega),\qquad \thet \in \mathcal{C}([0,T]; L^1(\Omega)),\qquad \thet>0\qquad\mbox{a.e. in $(0,+\infty)\times\Omega$},\\
\eta & \in \mathcal{C}([0,T]; L^1(\Omega)) \cap L^2(0,T; W^{1,2}(\Omega)), \quad \eta-\hat{\eta} \in L^2(0,T; W^{1,2}_0(\Omega)),
\end{aligned}
\end{align}
\item the initial data are attained in the following sense
$$
\lim_{t\to 0_+} \|\vv(t)-\vv_0\|_2^2+ \|\thet(t)-\thet_0\|_1 = 0,
$$
\item
the equation~\eqref{MB1} is satisfied in the following sense
\begin{equation}\label{weak1}
\langle \partial_t\vv, \vw \rangle + \intO{(\S- \vv \otimes \vv): \nabla \vw}=0 \quad \textrm{ for all }\vw\in W^{1,p}_{0,\diver} \textrm{ and for a.a. } t\in (0,\infty);
\end{equation}
\item  the equation~\eqref{MB8} is satisfied in the following sense
\begin{equation}\label{entropy-limit}
\begin{split}
&-\intTO{\eta\dert\varphi}  - \intTO{\eta\,\vv\cdot \nabla\varphi}  + \intTO{\kappa(\thet)\nabla\eta\cdot\nabla\varphi} \\
&= \intTO{\frac{\S:\D{\vv}}{\thet}\,\varphi} + \intTO{\kappa(\thet)\,|\nabla \eta|^2\, \varphi} + \intO{\eta_0\, \varphi(0)}\\
&\qquad {}\qquad {}\qquad  \textrm{ for every $\varphi \in \mathcal{C}_0^\infty((-\infty, T)\times\Omega)$},
\end{split}
\end{equation}
\item
the following asymptotic property holds for any $T>0$
\begin{equation}\label{limit-k}
\lim_{k\to + \infty} \int_0^T\int_\Omega\chi_{\{\thet>k\}} k|\nabla \eta|^2\dx\dt=0.
\end{equation}
\end{itemize}
\end{definition}

The existence of a proper solution is in principle  established in \cite{ABK}.
In that article a solution is constructed such that it satisfies all properties of the proper solution except \eqref{limit-k}. Property~\eqref{limit-k} can be obtained by testing the heat equation with $\varphi=((\thet-k)/\thet)_+$. Unfortunately, we do not know if this is possible for any solution with regularity~\eqref{MBap} since the regularity of~$\nabla \thet$ is poor. Instead, one considers approximations of $\thet$ constructed in \cite{ABK}, tests the corresponding equation and derives the estimate. Convergence results from \cite{ABK} then allow to obtain \eqref{limit-k} also for the limiting function $\eta=\ln\thet$. Consequently, \cite{ABK} together with the mentioned procedure gives existence of the proper solution.

The property \eqref{limit-k} plays a role of a selector of the \emph{correct} solution. It will allow us to renormalize the equation for $\eta$ and consequently also for $\thet$ in the desired way, see Section~\ref{S3.4}. Note, that the renormalization procedure in this paper is slightly different than the usual methods used in the fluid mechanics since here, we also renormalize by a \emph{spatially} dependent function.

\subsection{Auxiliary functions}\label{sec:auxfnc}
Here, we introduce a notation for several auxiliary functions. For any $k\in \N$, we define the standard cut-off function
$$
\mathcal{T}_k(z):={\rm sign}(z) \min\{|z|,k\}.
$$
Next, we introduce a primitive function to the heat conductivity $\kappa$ and set
\begin{equation}\label{DFG}
\G(s):=\int_0^s \kappa(z)\, {\rm d} z.
\end{equation}
Since $\kappa$ is bounded from above and below (see the assumption \eqref{MBBounds}), the function $\G$ is strictly increasing and enjoys the following estimate
\begin{equation}\label{bound-G}
\underline{\kappa} s\leq \G(s) \leq \overline{\kappa} s  \qquad \textrm{ for all } s\ge 0.
\end{equation}
Then, with the help of above definitions, we define for any  $k\in \N$ and any $\alpha>0$
$$
\mathcal{F}_k(s):=\int_1^s \frac{\mathcal{T}_k(z)}{z} \, {\rm d}z \ \mbox{ and } \ \mathcal{F}_k^\al(s):=\int_1^s \frac{\mathcal{T}_k(z)}{z} (\G(\mathcal{T}_k(z)))^{-\al}\,{\rm d}z.
$$
It directly follows from the definition that we have the following convergence results
\begin{equation}\label{Fmk}
\begin{aligned}
\mathcal{F}_k(s) &\to s-1 &&\mbox{ for any } s>0, \mbox{ as } k\to +\infty,\\
\mathcal{F}_k^\al(s) &\to \mathcal{H}^\al(s):= \int_1^s (\G(z))^{-\al}\,{\rm d}z &&\mbox{ for any } s>0, \mbox{ as } k\to +\infty.
\end{aligned}
\end{equation}
Finally, for arbitrary $\beta\geq0$,  we define the function $L_{\beta}:\mathbb{R}^3\times(0,+\infty)\times(0,+\infty)\to\mathbb{R}$ by
\begin{equation}\label{LMB}
L_{\beta}(\vv,\thet,\thetah):= \beta|\vv|^2 + \thet-\thetah - (\mathcal{H}^\al(\thet) - \mathcal{H}^\al(\thetah))(\G(\thetah))^{\al}
\end{equation}
that measures a certain distance to the equilibria. 

Using the definition of $\mathcal{H}^{\al}$ we see that
$$
\frac{\partial L_{\beta}}{\partial{\thet}}(\vv,\thet,\thetah) = 1-\frac{(\G(\thetah))^{\al}}{(\G(\thet))^{\al}}, \qquad \frac{\partial^2 L_{\beta}}{\partial{\thet^2}}(\vv,\thet,\thetah) = \frac{\al \kappa (\theta)(\G(\thetah))^{\al}}{(\G(\thet))^{\al+1}}.
$$
Since $\G$ is strictly increasing and positive (for positive $\thet$) it is evident that for any fixed $\thetah>0$, $\al\geq0$ and $\beta\geq0$, the function $(\vv,\thet)\mapsto L_{\beta}(\vv,\thet,\thetah)$ is nonnegative and attains zero only for $(\vv,\thet)=(\vc{0},\thetah)$. Moreover, if $\beta>0$, the function $L_{\beta}(\vv,\thet,\thetah)$ is strictly convex in $(\vv,\thet)\in\mathbb{R}^3\times(0,+\infty)$.

\subsection{Result}
Having all necessary notation, we can now formulate the main theorem of the paper.
\begin{theorem}\label{TMB}
Let $\underline{\kappa}$, $\overline{\kappa}$, $\underline{\thet}$, $\overline{\thet}>0$ be given and fulfill $\tmin\leq\tmax$, $\underline{\kappa}\leq\overline{\kappa}$. Then there exists $\mu>0$ depending only on $\underline{\kappa}$ and $\Omega$ ($\mu$ is explicitly given below \eqref{muprv}) such that any proper solution satisfies  for all $\tau>\sigma\geq 0$
\begin{equation}\label{eq:twocrosses}
\|\vv(\tau)\|_2^2\le e^{-\mu (\tau-\sigma)}\|\vv(\sigma)\|_2^2.
\end{equation}
Moreover, let $\al\in (1/2,2/3]$ and  $R>0$ be arbitrary and fixed. Then, there exist $K,M>0$ such that any proper solution satisfies for all $\tau\geq\sigma\geq0$, $\lambda\in(0,\min(\mu,K))$ and $\beta:=2M\mu/(\mu-\lambda)$
\begin{equation}
  \|\vv_0\|_2^2 + \|\thet_0\|_1\le R \implies
  \intO{L_{\beta}(\vv(\tau),\thet(\tau),\thetah)} \le e^{-\lambda(\tau-\sigma)}\intO{L_{\beta}(\vv(\sigma),\thet(\sigma),\thetah)} . \label{assymptotictwo}
\end{equation}
The precise dependence of $K$ and $M$ on data of the problem is expressed in \eqref{rn28} and \eqref{rn29}.
\end{theorem}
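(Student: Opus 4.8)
\section*{Proof proposal}

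The plan is to derive two differential inequalities---one purely kinetic, one for the full Lyapunov functional---and then combine them. \textbf{Step 1 (velocity decay).} Testing the weak momentum equation \eqref{weak1} with $\vw=\vv$ (which is admissible since $\vv(t)\in W^{1,p}_{0,\diver}$ for a.a.\ $t$ and the convective term vanishes by incompressibility and the homogeneous boundary condition) gives the energy identity
\begin{equation*}
\frac12\Dt\|\vv(t)\|_2^2 + \intO{\S:\D\vv} = 0 .
\end{equation*}
By the coercivity in \eqref{MBBounds} with $\delta=1$ we have $\S:\D\vv\ge\underline{\kappa}(1+|\D\vv|)^{p-2}|\D\vv|^2\ge\underline{\kappa}|\D\vv|^2$ (since $p\ge 11/5>2$ the factor $(1+|\D\vv|)^{p-2}\ge 1$), and Korn's plus Poincar\'e's inequality on $\Omega$ yield $\intO{|\D\vv|^2}\ge c_\Omega\|\vv\|_2^2$. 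Hence $\Dt\|\vv\|_2^2\le-\mu\|\vv\|_2^2$ with $\mu:=2\underline{\kappa}c_\Omega$, and Gronwall gives \eqref{eq:twocrosses}. This is essentially routine; the only care needed is that the energy equality (not merely inequality) holds, which is part of the definition of proper solution via $\vv\in\mathcal{C}([0,T];L^2_{0,\diver})$ and the duality $\partial_t\vv\in L^{p'}(0,T;(W^{1,p}_{0,\diver})^*)$.

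\textbf{Step 2 (renormalized entropy/temperature balance).} The heart of the argument is to produce an evolution equation for $\intO{\big(\thet-\thetah-(\mathcal{H}^\al(\thet)-\mathcal{H}^\al(\thetah))(\G(\thetah))^\al\big)}$. To this end I would renormalize the entropy equation \eqref{entropy-limit}: using a test function built from $\mathcal{F}_k^\al$ composed with $\thet$ and multiplied by the spatially dependent weight $(\G(\thetah))^\al$, pass $k\to\infty$ exploiting precisely the selection property \eqref{limit-k} together with the convergences \eqref{Fmk}, and also use the stationary equation \eqref{thetahdf} for $\thetah$ to handle the terms where the gradient falls on the weight. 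After cancellations (the positive production term $\S:\D\vv/\thet$ combines favourably with $\kappa(\thet)|\nabla\eta|^2$ from \eqref{MB8}), one should arrive at an identity of the form
\begin{equation*}
\Dt\intO{\Big(\thet-\thetah-(\mathcal{H}^\al(\thet)-\mathcal{H}^\al(\thetah))(\G(\thetah))^\al\Big)} + \mathcal{D}_{\mathrm{ent}}[\thet,\thetah] = -\intO{\Big(1-\tfrac{(\G(\thetah))^\al}{(\G(\thet))^\al}\Big)\frac{\S:\D\vv}{\thet}} + (\textrm{good terms}),
\end{equation*}
where $\mathcal{D}_{\mathrm{ent}}$ is a nonnegative dissipation controlling a weighted $\|\nabla(\mathcal{H}^\al(\thet)-\mathcal{H}^\al(\thetah))\|$-type quantity, and by a weighted Poincar\'e inequality this dissipation dominates $\intO{(\thet-\thetah-\dots)}$ up to a constant, giving a term $-K\intO{(\dots)}$ on the right. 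The restriction $\al\in(1/2,2/3]$ enters here: it is what makes the weight integrable/coercive in the right exponent and makes the cross term with $\S:\D\vv/\thet$ absorbable.

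\textbf{Step 3 (closing the estimate).} Multiplying Step 1 by $\beta$ and adding to Step 2, the dangerous term is the right-hand side coupling $\intO{(1-(\G(\thetah))^\al/(\G(\thet))^\al)\,\S:\D\vv/\thet}$. Since $\thet$ is only known to be positive a.e.\ with no pointwise lower bound in general, one cannot bound $1/\thet$ crudely; the factor $(1-(\G(\thetah))^\al/(\G(\thet))^\al)$ changes sign and vanishes where $\thet=\thetah$, and this is the saving grace---on the set where $\thet$ is small the first factor is negative and helps, while on the set where $\thet$ is large it is bounded. Using $\S:\D\vv\le\overline\kappa(1+|\D\vv|)^{p-1}|\D\vv|\le C(1+|\D\vv|^p)$ and the uniform a priori bound $\|\vv_0\|_2^2+\|\thet_0\|_1\le R$ (which propagates via the energy balance to control $\int_\sigma^\tau\|\D\vv\|_p^p\,\dt$), this term is shown to be of the form $M\|\vv\|_2^2$ up to harmless contributions. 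Choosing $\beta=2M\mu/(\mu-\lambda)$ balances the $\beta$-weighted velocity dissipation $-\beta\mu\|\vv\|_2^2$ against $+M\|\vv\|_2^2$ with enough margin to retain a $-\lambda$-rate on the whole functional, and one concludes
\begin{equation*}
\Dt\intO{L_\beta(\vv,\thet,\thetah)} \le -\lambda\intO{L_\beta(\vv,\thet,\thetah)},
\end{equation*}
for any $\lambda\in(0,\min(\mu,K))$, whence \eqref{assymptotictwo} by Gronwall. \textbf{Main obstacle.} The genuinely hard step is Step 2: justifying the renormalization of the entropy equation with a \emph{spatially varying} weight $(\G(\thetah))^\al$ at the level of weak (proper) solutions, controlling all the error terms produced when $\nabla$ hits that weight (here \eqref{thetahdf} is indispensable), and extracting from \eqref{limit-k} exactly the control needed to pass to the limit in the truncation---together with verifying that the resulting entropy dissipation is coercive enough, in the precise range $\al\in(1/2,2/3]$, to beat both the production term and give the Poincar\'e-type lower bound. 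Everything else is Gronwall bookkeeping and convexity of $L_\beta$ established already in Section~\ref{sec:auxfnc}.
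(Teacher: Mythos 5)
Your overall architecture coincides with the paper's: kinetic energy decay by testing with $\vv$ (Section~\ref{S3.2}), renormalization of the entropy equality with a test function built from $\mathcal{T}_k(\thet)$ and the spatial weight $(\G(\thetah)/\G(\mathcal{T}_k(\thet)))^{\al}$ using \eqref{limit-k} and \eqref{thetahdf} (Section~\ref{S3.4}), a Sobolev--Poincar\'e coercivity estimate turning the entropy dissipation into $-K\intO{f(\thet,\thetah)}$ (estimate \eqref{Holder} via Lemma~\ref{LM2}), and absorption of the remaining production term by the velocity dissipation before Gronwall. There is, however, one concrete gap. You assert that ``$\thet$ is only known to be positive a.e.\ with no pointwise lower bound in general'' and lean on a sign argument instead. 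In fact the compatibility condition \eqref{MB7.5} imposes $\thet_0\ge\tmin$, and the paper derives the minimum principle $\thet\ge\tmin$ a.e.\ (Section~\ref{S3.3.5}, estimate \eqref{ap-theta2}) by testing with a truncation built from $(\thet-\tmin)_-/\thet$. This lower bound is not optional for your Step~2: the pointwise bound of Lemma~\ref{LM2} carries the factors $(\thetah/\thet)^{\al}$ and $(\thetah/\thet)^{(1-\al)/2}$, which are unbounded as $\thet\to0$, so the H\"older--Sobolev chain in \eqref{Holder} (which also needs the uniform $L^1$ bound \eqref{ap-theta} on $\thet$, again established in Section~\ref{S3.3.5}) does not close without $\thet\ge\tmin$. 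Your ``weighted Poincar\'e'' step therefore silently requires exactly the ingredient you discarded.

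Two smaller corrections. First, the constraint $\al\in(1/2,2/3]$ does not come from absorbing the production term: the lower bound $\al>1/2$ is forced by the $k\to\infty$ limit passage in the convective and elliptic terms, which needs $|\nabla\thet|^2/\thet^{2\al}\in L^1$ (derived from \eqref{limit-k} in \eqref{nabla}), while $\al\le2/3$ is precisely the Sobolev embedding $W^{1,2}_0(\Omega)\hookrightarrow L^{2/(1-\al)}(\Omega)$ in three dimensions used in \eqref{Holder}. Second, after the renormalization the production term reads $\intO{\S:\D\vv\,\bigl(1-(\G(\thetah)/\G(\thet))^{\al}\bigr)}$ --- the factor $\mathcal{T}_k(\thet)/\thet$ cancels the $1/\thet$ in the limit --- and it is simply bounded by $M\intO{\S:\D\vv}$; the absorption is then performed by integrating the energy identity \eqref{st0} in time against $e^{\lambda t}$, which produces boundary terms $\|\vv(\sigma)\|_2^2$ and $\|\vv(\tau)\|_2^2$ and the specific value of $\beta$, rather than by a pointwise-in-time bound of the form $M\|\vv\|_2^2$.
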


Note that combining \eqref{eq:twocrosses} with \eqref{assymptotictwo} for~$\sigma=0$ gives existence of $C(\|\vv_0\|_2,\|\thet_0\|_1)$ such that for any~$\tau>0$
$$
\|\vv(\tau)\|_2^2\le C e^{-\mu \tau}\quad\mbox{and}\quad\intO{L_{\beta}(\vv(\tau),\thet(\tau),\thetah)} \le Ce^{-\lambda \tau}.
$$

\section{Proof of Theorem~\ref{TMB}}\label{S3}

This final section is devoted to the proof of the main theorem. In Section~\ref{S3.1}, we derive certain important algebraic inequalities. In Section~\ref{S3.2} we show the stability result for the velocity field. Then we focus on the temperature equation and introduce its time mollification in Section~\ref{S3.3}, which is used in Section~\ref{S3.3.5} to get the first a~priori estimates for the temperature. Finally, we derive rigorously the corner stone inequality \eqref{rn2} in Section~\ref{S3.4} and then in Section~\ref{S3.6} we finish the proof.

Since the definite value of $\delta>0$ plays no role from the point of the qualitative analysis and we are not interested in the dependence of constants on $\delta$, we, for the sake of readability, provide the proof only for $\delta=1$.

\subsection{Several algebraic inequalities} \label{S3.1}
In this preliminary part, we show some auxiliary inequalities. In what follows, we use the following notation
\begin{equation}\label{definice}
\begin{aligned}
f(\thet,\thetah)&:=\thet-\thetah - (\mathcal{H}^\al(\thet) - \mathcal{H}^\al(\thetah))(\G(\thetah))^{\al},\\
 g(\thet,\thetah)&:=\left| \left(\frac{\thet}{\thetah}\right)^\frac{1-\alpha}{2}-1\right|^2,\\
\bar{h}(\thet,\thetah)&:=\left(\frac{\G(\thet)}{\G(\thetah)}\right)^\frac{1-\alpha}{2}-1,
\end{aligned}
\end{equation}
where $\alpha\in (0,1)$ is given and $\thet,\thetah \in \mathbb{R}_+$ are arbitrary. Then, for the functions $f$ and $g$ we have the following:
\begin{lemma}\label{LM1}
Let $\kappa$ satisfy \eqref{MBBounds}. Then for any $\alpha\in (0,1)$ there exists a constant $C>0$ such that for all $\thet,\thetah>0$ we have
\begin{equation}\label{lemma}
0\le f(\thet,\thetah)\le C(\al,\underline{\kappa},\overline{\kappa})\thetah\left(\frac{\thet}{\thetah}\right)^{\al} \left(1+\left(\frac{\thetah}{\thet}\right)^{\al}+\left(\frac{\thetah}{\thet}\right)^{\frac{1-\al}{2}} \right)  g(\thet,\thetah).
\end{equation}
\end{lemma}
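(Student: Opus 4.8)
The plan is to reduce both parts to the study of the one-variable map $s\mapsto f(s,\thetah)$ and to a scalar matching inequality in the ratio $\tau:=\thet/\thetah$. For the lower bound nothing new is needed: one has $f(\thet,\thetah)=L_0(\vc{0},\thet,\thetah)$ with $L_\beta$ as in \eqref{LMB}, and the nonnegativity of $(\vv,\thet)\mapsto L_\beta(\vv,\thet,\thetah)$, together with the fact that it vanishes only at $\thet=\thetah$, was recorded right after \eqref{LMB}; alternatively, from $\frac{\partial f}{\partial\thet}(\thet,\thetah)=1-(\G(\thetah)/\G(\thet))^{\al}$ and $f(\thetah,\thetah)=0$ we may write
\[
f(\thet,\thetah)=\int_{\thetah}^{\thet}\Bigl(1-\bigl(\tfrac{\G(\thetah)}{\G(s)}\bigr)^{\al}\Bigr)\,{\rm d}s ,
\]
where the integrand has the sign of $s-\thetah$ since $\G$ is increasing, so the integral is $\ge 0$.

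For the upper bound I would start from the same representation. The integrand $s\mapsto 1-(\G(\thetah)/\G(s))^{\al}$ is increasing in $s$ and vanishes at $s=\thetah$, so on the interval between $\thetah$ and $\thet$ it is bounded in modulus by its value at the endpoint $\thet$; with $\rho:=\G(\thet)/\G(\thetah)$ this yields
\[
0\le f(\thet,\thetah)\le|\thet-\thetah|\,\bigl|1-\rho^{-\al}\bigr|=\thetah\,|\tau-1|\,\bigl|1-\rho^{-\al}\bigr| .
\]
Next I would estimate $|1-\rho^{-\al}|$ by elementary convexity: for $\thet\ge\thetah$ (so $\rho\ge1$) one has $0\le 1-\rho^{-\al}\le\min\{1,\al(\rho-1)\}$; for $\thet<\thetah$ (so $\rho\le1$) one has $0\le\rho^{-\al}-1\le\al\rho^{-1}(1-\rho)$; and, integrating the (integrable, since $\al<1$) singularity of $(\G(\cdot))^{-\al}$ at the origin, one also gets the crude uniform bound $f(\thet,\thetah)\le(1-\al)^{-1}(\overline{\kappa}/\underline{\kappa})^{\al}\,\thetah$ whenever $\thet<\thetah$. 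I would then pass from $\rho$ to $\tau$ using only \eqref{bound-G}, which gives $c^{-1}\tau\le\rho\le c\tau$ and $|\rho-1|\le c\,|\tau-1|$ with $c:=\overline{\kappa}/\underline{\kappa}$. After substitution one is left with three uniform estimates, with $C=C(\al,\underline{\kappa},\overline{\kappa})$: $f\le C\thetah$ for $\tau\le\tfrac12$; $f\le C\thetah(\tau-1)^2$ for $\tfrac12\le\tau\le2$; and $f\le C\thetah\,\tau$ for $\tau\ge2$.

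It then remains a scalar matching against the right-hand side of \eqref{lemma}, which equals $C\thetah\bigl(\tau^{\al}+1+\tau^{\al-m}\bigr)g(\thet,\thetah)$ with $m:=(1-\al)/2$ and $g=(\tau^{m}-1)^2$. I would split the range of $\tau$. For $\tau\le\tfrac12$ one has $g\ge(1-2^{-m})^2>0$ and $\tau^{\al}+1+\tau^{\al-m}\ge 1$, so the right-hand side dominates $C\thetah$. For $\tfrac12\le\tau\le2$ one has $g\ge C(\tau-1)^2$ (because $(\tau^{m}-1)/(\tau-1)$ is continuous and positive on this compact interval) and the factor is $\ge1$, so the right-hand side dominates $C\thetah(\tau-1)^2$. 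For $\tau\ge2$ one has $g\ge C\tau^{2m}$, hence, using the identity $\al+2m=1$, $\tau^{\al}g\ge C\tau^{\al+2m}=C\tau$, so the right-hand side dominates $C\thetah\,\tau$. Combining the three regimes gives \eqref{lemma}.

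I expect the main obstacle to be exactly this matching, which must work simultaneously at $\tau=1$, at $\tau\to\infty$ and at $\tau\to0$ --- and this is precisely what forces all three summands $1$, $(\thetah/\thet)^{\al}$, $(\thetah/\thet)^{m}$ into the statement. In particular a naive comparison such as $(\rho^{m}-1)^2\le C(\tau^{m}-1)^2$ is \emph{false}, since $\G(\thetah)/\G(\thet)\ne\thetah/\thet$ in general and the resulting constant $c>1$ spoils it near $\tau=1$; one must instead use the sharper Lipschitz-type bound $|\rho-1|\le c\,|\tau-1|$ to recover the correct quadratic vanishing at $\tau=1$, exploit the genuine growth of $g$ (through $\al+2m=1$) as $\tau\to\infty$, and merely use the boundedness of $f$ as $\tau\to0$, where $g\to1$ and only the ``$1$'' summand survives.
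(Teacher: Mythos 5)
Your proposal is correct, and it reaches \eqref{lemma} by a genuinely different route than the paper. The paper represents $f$ by a \emph{double} integral (two nested applications of the fundamental theorem of calculus), writes $g$ in a parallel integral form, forms the quotient $f/g$ directly, evaluates the inner integrals in closed form, and is left with estimating $|\tau^{\al}-1|/|\tau^{(1-\al)/2}-1|$ (where $\tau=\thet/\thetah$) with a single case split at $\tau=2$. You instead use the one-dimensional representation $f(\thet,\thetah)=\int_{\thetah}^{\thet}\bigl(1-(\G(\thetah)/\G(s))^{\al}\bigr)\,{\rm d}s$, exploit monotonicity of the integrand to get $f\le\thetah|\tau-1|\,|1-\rho^{-\al}|$ with $\rho=\G(\thet)/\G(\thetah)$, convert $\rho$ to $\tau$ via \eqref{bound-G} and the Lipschitz bound $|\rho-1|\le(\overline{\kappa}/\underline{\kappa})|\tau-1|$, and then match three regime-wise upper bounds for $f$ against lower bounds for $g$. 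I checked the individual ingredients (the convexity inequalities for $1-\rho^{-\al}$, the crude bound $f\le(1-\al)^{-1}(\overline{\kappa}/\underline{\kappa})^{\al}\thetah$ for $\thet<\thetah$ from the integrable singularity of $\G^{-\al}$, and the three matchings using $\al+2\cdot\tfrac{1-\al}{2}=1$) and they all hold. What your approach buys is that it avoids computing antiderivatives and makes visible why each of the three summands $1$, $(\thetah/\thet)^{\al}$, $(\thetah/\thet)^{(1-\al)/2}$ is forced by one of the three asymptotic regimes; what the paper's approach buys is compactness, since bounding the single quotient $f/g$ handles the quadratic vanishing at $\tau=1$ automatically without a separate lower bound on $g$ near $\tau=1$. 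Both give constants depending only on $\al$, $\underline{\kappa}$, $\overline{\kappa}$.
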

\begin{proof}
We can write
\begin{equation*}
\begin{split}
f(\thet,\thetah)&=\thet-\thetah - \G(\thetah)^{\al}\int_0^1\frac{{\rm d}}{{\rm d}s}\mathcal{H}^\alpha((\thet-\thetah)s+\thetah)\,{\rm d}s \\&
=(\thet-\thetah)\int_0^1\frac{(\G((\thet-\thetah)s+\thetah))^{\al}-(\G(\thetah))^{\al}}{(\G((\thet-\thetah)s+\thetah))^{\al}}\,{\rm d}s \\
&= (\thet-\thetah)\int_0^1\frac{1}{(\G((\thet-\thetah)s+\thetah))^{\al}} \left(\int_0^1\frac{{\rm d}}{{\rm d}t} (\G((\thet-\thetah)st+\thetah))^{\al}\,{\rm d}t \right)\,{\rm d}s\\
&=(\thet-\thetah)^2\int_0^1\frac{\al}{(\G((\thet-\thetah)s+\thetah))^{\al}} \left(\int_0^1 \frac{\kappa((\thet-\thetah)st+\thetah)}{(\G((\thet-\thetah)st+\thetah))^{1-\al}}  \, s\, {\rm d}t\right)\, {\rm d}s,
\end{split}
\end{equation*}
where we used the definition of $\G$ and $\mathcal{H}^{\alpha}$ (see \eqref{DFG} and \eqref{Fmk}). From the above identity, it directly follows that $f$ is nonnegative. Similarly, for $g$ we obtain
\begin{equation*}
g(\thet,\thetah)=\frac{\left|\thet^{\frac{1-\al}{2}}-\thetah^{\frac{1-\al}{2}} \right|^2}{|\thetah|^{1-\al}}=\frac{|\thet-\thetah|^2}{|\thetah|^{1-\al}}\left| \int_0^1 \frac{1-\al}{2 ((\thet-\thetah)s+\thetah)^{\frac{1+\al}{2}}} {\rm d}s\right|^2.
\end{equation*}
Therefore, using also the assumption \eqref{MBBounds}, we get for $\thet\neq \thetah$ that
\begin{equation*}
\begin{split}
\frac{f(\thet,\thetah)}{ g(\thet,\thetah)}&= \frac{4\al\thetah^{1-\al}}{(1-\al)^2\left|\int_0^1 ((\thet-\thetah)s+\thetah)^{\frac{-1-\al}{2}} {\rm d}s\right|^2}\\&\cdot \int_0^1 \int_0^1 \frac{ (\G((\thet-\thetah)st+\thetah))^{\al-1} \, \kappa((\thet-\thetah)st+\thetah) \, s}{\G((\thet-\thetah)s+\thetah)^{\al}}  {\rm d}t\,{\rm d}s\\
&\le \frac{4\overline{\kappa}\al\thetah^{1-\al}}{(1-\al)^2\left|\int_0^1 ((\thet-\thetah)s+\thetah)^{\frac{-1-\al}{2}} {\rm d}s\right|^2} \int_0^1 \int_0^1 \frac{ (\G((\thet-\thetah)st+\thetah))^{\al-1} \, s}{(\G((\thet-\thetah)s+\thetah))^{\al}}  {\rm d}t\,{\rm d}s\\
&= \frac{4\overline{\kappa}\al\thetah^{1-\al}}{(1-\al)^2\left|\int_0^1 ((\thet-\thetah)s+\thetah)^{\frac{-1-\al}{2}} {\rm d}s\right|^2} \int_0^1 \left(\int_0^s \frac{ (\G((\thet-\thetah)t+\thetah))^{\al-1}}{(\G((\thet-\thetah)s+\thetah))^{\al}}  {\rm d}t\right){\rm d}s\\
&\le \frac{4\overline{\kappa}\al\thetah^{1-\al}}{\underline{\kappa}(1-\al)^2\left|\int_0^1 ((\thet-\thetah)s+\thetah)^{\frac{-1-\al}{2}} {\rm d}s\right|^2} \int_0^1 \int_0^1 \frac{ ((\thet-\thetah)t+\thetah)^{\al-1}}{((\thet-\thetah)s+\thetah)^{\al}}  {\rm d}t\,{\rm d}s.
\end{split}
\end{equation*}
Since for any $\beta\in (0,1)$ we have
$$
\int_0^1 ((\thet-\thetah)s+\thetah)^{-\beta} {\rm d}s=\frac{(\thet)^{1-\beta}-(\thetah)^{1-\beta}}{(1-\beta)(\thet-\thetah)},
$$
we can substitute it to the above inequality to deduce the estimate
\begin{equation}\label{fg}
\begin{split}
\frac{f(\thet,\thetah)}{ g(\thet,\thetah)}&\le \frac{\overline{\kappa}\al\thetah^{1-\al}}{\underline{\kappa}\left|(\thet)^{\frac{1-\al}2}-(\thetah)^{\frac{1-\al}2} \right|^2}\frac{((\thet)^{1-\al}-(\thetah)^{1-\al})((\thet)^{\al}-(\thetah)^{\al})}{\al(1-\al)}\\
&= \frac{\overline{\kappa}\thetah(1+(\frac{\thetah}{\thet})^{\frac{1-\al}{2}})}
{\underline{\kappa}(1-\al)}\left(\frac{\thet}{\thetah}\right)^{\frac{1-\al}2}
\frac{|(\frac{\thet}{\thetah})^{\al}-1|}{|(\frac{\thet}{\thetah})^{\frac{1-\al}2}- 1| }
\end{split}
\end{equation}
Next, since $\alpha\in (0,1)$, we have the following estimate
$$
\frac{|(\frac{\thet}{\thetah})^{\al}-1|}{|(\frac{\thet}{\thetah})^{\frac{1-\al}2}- 1| } \le \left\{
\begin{aligned}
&C(\alpha)&&\textrm{ for }\frac{\thet}{\thetah}\in [0,2],\\
&C(\alpha)\left(\frac{\thet}{\thetah}\right)^{-\frac{1-3\al}2}  &&\textrm{ for }\frac{\thet}{\thetah}\ge 2.
\end{aligned}
\right.
$$
Hence, it follows from \eqref{fg} that
\begin{equation}\label{fg12}
\frac{f(\thet,\thetah)}{ g(\thet,\thetah)}\le \left\{
\begin{aligned}
&C(\alpha)\frac{\overline{\kappa}\thetah(1+(\frac{\thet}{\thetah})^{\frac{1-\al}{2}})}
{\underline{\kappa}(1-\al)}\leq C(\alpha)\frac{\overline{\kappa}}{\underline{\kappa}}\thetah,
&&\textrm{ for }\frac{\thet}{\thetah}\in [0,2],\\
&C(\alpha)\frac{\overline{\kappa}\thetah(1+(\frac{\thetah}{\thet})^{\frac{1-\al}{2}})}
{\underline{\kappa}(1-\al)}\left(\frac{\thet}{\thetah}\right)^{\al}
 &&\textrm{ for }\frac{\thet}{\thetah}\ge 2.
\end{aligned}
\right.
\end{equation}
The inequality \eqref{lemma} is then a direct consequence of \eqref{fg12}.

\end{proof}

\begin{lemma}\label{LM2}
Let $\kappa$ satisfy \eqref{MBBounds}. Then for any $\alpha\in (0,1)$ there exists a constant $C>0$ such that for all $\thet,\thetah>0$  we have
\begin{equation}\label{f-G}
f(\thet,\thetah)\leq  C(\al,\underline{\kappa},\overline{\kappa})\thetah\left(\frac{\thet}{\thetah}\right)^{\al} \left(1+\left(\frac{\thetah}{\thet}\right)^{\al}+\left(\frac{\thetah}{\thet}\right)^{\frac{1-\al}{2}} \right) \left| \bar{h}(\thet,\thetah)\right|^2.
\end{equation}
\end{lemma}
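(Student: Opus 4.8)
The plan is to derive \eqref{f-G} from Lemma~\ref{LM1} by a purely scalar comparison, with no new integral computation. Since the right-hand sides of \eqref{lemma} and \eqref{f-G} differ only in the last factor, $g(\thet,\thetah)$ versus $|\bar h(\thet,\thetah)|^2$, it is enough to prove that there is $C=C(\al,\underline\kappa,\overline\kappa)$ with
\[
g(\thet,\thetah)\le C\,|\bar h(\thet,\thetah)|^2\qquad\textrm{for all }\thet,\thetah>0 ;
\]
feeding this into \eqref{lemma} then gives \eqref{f-G} with the prefactor multiplied by $C$. (In fact $g$ and $|\bar h|^2$ turn out to be comparable, but only this one-sided bound is needed.)

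Write $r:=\tfrac{1-\al}{2}\in(0,1/2)$, $a:=\thet/\thetah$ and $b:=\G(\thet)/\G(\thetah)$, so that $g(\thet,\thetah)=|a^{r}-1|^2$ and $|\bar h(\thet,\thetah)|^2=|b^{r}-1|^2$. The first ingredient is that the \emph{distances to $1$ of $a$ and $b$ are comparable}: from $\G(\thet)-\G(\thetah)=\int_{\thetah}^{\thet}\kappa(z)\,{\rm d}z$ and $\underline\kappa\le\kappa\le\overline\kappa$ one gets $\underline\kappa|\thet-\thetah|\le|\G(\thet)-\G(\thetah)|\le\overline\kappa|\thet-\thetah|$, and combining this with $\underline\kappa\thetah\le\G(\thetah)\le\overline\kappa\thetah$ (see \eqref{bound-G}) yields
\[
\frac{\underline\kappa}{\overline\kappa}\,|a-1|\;\le\;|b-1|\;\le\;\frac{\overline\kappa}{\underline\kappa}\,|a-1| .
\]
The second ingredient is the elementary scalar estimate: for fixed $r\in(0,1)$ there is $c(r)\ge1$ with
\[
c(r)^{-1}\min\big(|s-1|,|s-1|^{r}\big)\;\le\;|s^{r}-1|\;\le\;c(r)\min\big(|s-1|,|s-1|^{r}\big)\qquad\textrm{for all }s>0 ,
\]
proved by splitting into $s\in(0,2]$ — where $|s^{r}-1|$ is comparable to $|s-1|=\min(|s-1|,|s-1|^{r})$, the singularity of $t^{r-1}$ at $t=1$ being harmless there — and $s\ge2$, where $|s^{r}-1|$, $s^{r}$ and $(s-1)^{r}=\min(|s-1|,|s-1|^{r})$ are all comparable. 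Finally, $0<t_1\le Ct_2$ with $C\ge1$ implies $\min(t_1,t_1^{r})\le C\min(t_2,t_2^{r})$; applying the scalar estimate to $s=a$ and $s=b$ and using the displayed comparison $|a-1|\sim|b-1|$ therefore gives $|a^{r}-1|^2\le C|b^{r}-1|^2$, i.e. $g\le C|\bar h|^2$, and \eqref{f-G} follows.

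The only point requiring care is that \emph{all constants must be independent of $\thet$ and $\thetah$ separately} — in this lemma $\thetah$ is not assumed bounded — which forces one to argue through the scale-invariant ratios $a$, $b$ and the scale-invariant bound \eqref{bound-G}. Relatedly, the scalar estimate cannot be weakened to $|s^{r}-1|\sim|s-1|$ (blows up relative distortion at $s=1$) nor to $|s^{r}-1|\sim|s-1|^{r}$ (fails near $s=1$), and knowing only that $a$ and $b$ are comparable is \emph{not} enough: one genuinely needs $|a-1|\sim|b-1|$, which is exactly where the two-sided bound $\underline\kappa\le\kappa\le\overline\kappa$ on the derivative of $\G$ is used. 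Once this bookkeeping is respected the argument is routine, and I expect this uniformity to be the only mild obstacle.
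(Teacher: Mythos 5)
Your proposal is correct, and it follows the same overall reduction as the paper: both arguments establish the single scalar inequality $\sqrt{g(\thet,\thetah)}=|(\thet/\thetah)^{\frac{1-\al}{2}}-1|\le C(\al,\underline{\kappa},\overline{\kappa})\,|\bar h(\thet,\thetah)|$ and then square and invoke Lemma~\ref{LM1}. Where you differ is in how this scalar inequality is proved. The paper's route is shorter: it observes that $\partial_\thet\bar h \ge c(\al,\underline{\kappa},\overline{\kappa})\,\partial_\thet[(\thet/\thetah)^{\frac{1-\al}{2}}-1]$ (both derivatives being positive, and the ratio controlled via $\kappa\ge\underline{\kappa}$ and \eqref{bound-G}), notes that both functions vanish at $\thet=\thetah$, and integrates. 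Your route instead passes through the intermediate quantity $\min(|s-1|,|s-1|^{r})$: you show $|a-1|\sim|b-1|$ with constants $\underline{\kappa}/\overline{\kappa}$ and $\overline{\kappa}/\underline{\kappa}$, prove the uniform two-sided comparison $|s^{r}-1|\sim_r\min(|s-1|,|s-1|^{r})$ by splitting at $s=2$, and chain these together using the monotonicity of $t\mapsto\min(t,t^{r})$ up to constants. Both arguments use exactly the same input (the two-sided bound on $\kappa$, hence on $\G'$ and on $\G(\thetah)/\thetah$) and yield the same uniformity in $\thet,\thetah$; the paper's derivative-and-integrate device is more economical, while your version makes explicit the quantitative behaviour of $|s^r-1|$ near $s=1$ and at infinity, which is the content the integration step hides. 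No gaps.
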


\begin{proof}

Using \eqref{bound-G} and \eqref{MBBounds}, it is not difficult to observe that
$$
C(\overline{\kappa}, \underline{\kappa}, \al) \frac{\partial}{\partial\thet} \left[ \left(\frac{\thet}{\thetah}\right)^{\frac{1-\al}{2}}-1\right]\leq  \frac{\partial\bar{h}(\thet,\thetah)}{\partial \thet}.
$$
Since $\bar{h}(\thet,\thetah)$ and $({\thet}/{\thetah})^{\frac{1-\al}{2}}-1$ are both zero when $\thet=\thetah$,  the integration leads to
$$
\sqrt{g(\thet,\thetah)}=\left |\left(\frac{\thet}{\thetah}\right)^{\frac{1-\al}{2}}-1 \right| \leq C(\overline{\kappa}, \underline{\kappa}, \al) |\bar{h}(\thet,\thetah)|.
$$
Consequently, raising to the power $2$ and employing \eqref{lemma}, we conclude the proof.
\end{proof}

\subsection{Uniform estimates for the velocity field} \label{S3.2}
Due to the sufficient regularity of $\vv$ we can set  $\vw:=\vv$ in \eqref{weak1} to deduce
\begin{equation}\label{st0}
 \frac{{\rm d}}{{\rm d}t} \frac{\|\vv\|^2_2}{2} = - \intO{\S:\D\vv} \mbox{ for all } t\in (0,T).
\end{equation}
Due to the nonnegativity of $\S:\D\vv$ this leads to the estimate
\begin{equation}\label{st1}
\intTOI{|\S:\D\vv|} \le \frac{\|\vv_0\|_2^2}{2}.
\end{equation}
Then, with the help of \eqref{MBBounds}, the embedding theorem and the Korn inequality, we deduce (recall we assume $\delta=1$ here)
\begin{equation}\label{premu}\begin{split}
\intO{|\vv|^2}&\leq C(\Omega)\intO{|\D\vv|^{2}}\le C(\Omega,\underline{\kappa})
\intO{\S:\D\vv},
\end{split}\end{equation}
and it follows from \eqref{st0} that
\begin{equation}\label{muprv}
\|\vv(t)\|^2_2 \leq \|\vv_0\|_2^2 \exp\{-\mu t\}  \mbox{ for all } t\in (0,T),
\end{equation}
where $\mu:=2/C(\Omega,\underline{\kappa})$ is a positive constant depending on $\Omega$ and $\underline{\kappa}$ via the constant $C(\Omega,\underline{\kappa})$ defined in \eqref{premu}.

\subsection{Reformulation of the entropy equality through the Steklov average in the time variable}\label{S3.3}
For any function $\varphi:(0,\infty)\times \Omega \to \mathbb{R}$ and any $t, h>0$, $x\in\Omega$ we denote
$$
 \varphi_h(t, x) = \fint_t^{t+h} \varphi(\tau, x) \,{\rm d}\tau.
$$
Similarly, for any $T,t,h>0$, $x\in\Omega$ and a function $\psi:(0,+\infty)\times\Omega\to\mathbb{R}$, such that $\supp \psi \subseteq (h, T-h)\times\Omega$, extended by zero outside of $(h,T-h)$, we introduce
$$
\psi_{-h}(t, x) = \fint_{t-h}^{t} \psi(\tau, x) \,{\rm d}\tau.
$$
Let us observe that
\begin{equation*}
\begin{split}
-\intTO{\eta\dert\psi_{-h}}  = - \frac{1}{h} \intTO{\eta(t)\, \psi(t)} + \frac{1}{h} \intTO{\eta(t)\, \psi(t-h)}.
\end{split}
\end{equation*}
Thus, using a change of variables in the second integral at the right-hand side and the fact that $\psi$ is supported on the interval $(h,T-h)$, we deduce
\begin{equation}
-\intTO{\eta\dert\psi_{-h}} = \intTO{\partial_t \fint_{t}^{t+h} \eta (\tau) {\rm d} \tau\, \psi(t)}= \intTO{\dert\eta_h\,\psi}.
\end{equation}
Consequently, setting $\varphi:=\psi_{-h}$ in \eqref{entropy-limit}, where $\psi \in \mathcal{C}_0^{\infty}((h,T-h)\times \Omega)$, and using the Fubini theorem it is easily seen that
\begin{equation*}
\begin{split}
&\intTO{\dert\eta_h\,\psi} + \intTO{(\nabla\eta\cdot \vv)_h \,\psi}  + \intTO{(\kappa(\thet)\nabla\eta)_h \cdot\nabla\psi}
\\
& = \intTO{\left(\frac{1}{\thet}\,\S:\D{\vv}\right)_h\,\psi} + \intTO{\left(\kappa(\thet)|\nabla\eta|^2\right)_h\,\psi}.
\end{split}
\end{equation*}
Finally, using the density and the weak$^*$ density argument and the a~priori information \eqref{MBap}, we see that the above identity holds also for $\psi:=u \varphi$, where $\varphi\in W^{1,\infty}(0,T)$ with $\supp\varphi\subset[h,T-h]$ and $u\in L^{\infty}(Q) \cap L^2(0,T; W_0^{1,2}(\Omega))$
\begin{equation}\label{entropy-limit2}
\begin{split}
&\intTO{\dert\eta_h\,u \varphi} + \intTO{(\nabla\eta\cdot \vv)_h \,u \varphi}  + \intTO{(\kappa(\thet)\nabla\eta)_h \cdot\nabla u \varphi}
\\
& = \intTO{\left(\frac{1}{\thet}\,\S:\D{\vv}\right)_h\,u \varphi} + \intTO{\left(\kappa(\thet)|\nabla\eta|^2\right)_h\,u \varphi}.
\end{split}
\end{equation}

\subsection{Uniform \texorpdfstring{$L^1$}{L} bound and minimum principle  for the temperature}\label{S3.3.5}
We consider $T, h>0$, $k\in\mathbb{N}$ and an arbitrary nonnegative $\varphi \in W^{1,\infty}(0,T)$ with $\supp\subset[h,T-h]$ and set
$$
u:= \mathcal{T}_k(\exp{\eta_h})\frac{(\exp{\eta_h}-\tmax)_+}{\exp{\eta_h}}
$$
in \eqref{entropy-limit2}. Here, we denoted $a_+:= \max\{a, 0\}$. Note that thanks to assumptions on the boundary data and \eqref{MBap}, we know that $u\in L^{\infty}(Q)\cap L^2(0,T; W^{1,2}_0(\Omega))$ and so $u$ is an admissible test function in \eqref{entropy-limit2}. Next, we evaluate the first term on the left hand side of \eqref{entropy-limit2}. Defining for $s\in\mathbb{R}$
$$
A_k(s):=\int_{-\infty}^s\mathcal{T}_k(\exp{\tau})\frac{(\exp{\tau}-\tmax)_+}{\exp{\tau}}\, {\rm d}\tau,
$$
we see that 
$0\leq A_k(s)\leq k(s-\ln \tmax)_+$ and
\begin{equation}\label{lim:ak}
A_k(s) \nearrow (\exp{s} - \tmax)_+ - \tmax (s-\ln \tmax)_+ \qquad \textrm{ as } k\to \infty.
\end{equation}
In addition, we can evaluate the first term in \eqref{entropy-limit2} by using integration by parts formula as
$$
\intTO{\dert\eta_h\,u\varphi}= \intTO{\dert A_k(\eta_h)\,\varphi}=-\intTO{ A_k(\eta_h)\,\dert\varphi}.
$$
Hence, we can let $h\to 0_+$ and by using the Lebesgue dominated convergence theorem, we observe that (recall that $\exp{\eta}=\thet$)
\begin{equation}\label{entropy-limit2.5}
\begin{split}
&-\intTO{ A_k(\eta)\,\dert\varphi} + \intTO{\nabla A_k(\eta)\cdot \vv  \varphi}  \\
&+\intTO{\kappa(\thet)|\nabla\eta|^2  \tmax \frac{T_k(\thet)}{\thet}\chi_{\{\thet >\tmax\}} \varphi}
\\
& = \intTO{\frac{\mathcal{T}_k(\thet)}{\thet}\,\S:\D{\vv}\frac{(\thet-\tmax)_+}{\thet} \varphi} + \intTO{\kappa(\thet)k|\nabla\eta|^2\frac{(\thet-\tmax)_+}{\thet} \chi_{\{\thet>k\}}\varphi}.
\end{split}
\end{equation}
Since $\diver \vv =0$, we have
$$
\intTO{\nabla A_k(\eta)\cdot \vv  \varphi} =0
$$
and the third term on the left hand side of~\eqref{entropy-limit2.5} is non negative.
Next, using~\eqref{limit-k}, we get for~$\|\varphi\|_\infty\leq 1$
$$
\intTO{\kappa(\thet)k|\nabla\eta|^2\frac{(\thet-\tmax)_+}{\thet} \chi_{\{\thet>k\}}\varphi}\le C(\overline{\kappa}, \tmax)\intTO{k|\nabla\eta|^2\chi_{\{\thet>k\}}}\to 0 \textrm{ as } k\to \infty.
$$
Similarly, using \eqref{st1}, we deduce for $\|\varphi\|_\infty\leq 1$
\begin{equation}\label{entropy-limit2.55}
\intTO{\frac{\mathcal{T}_k(\thet)}{\thet}\,\S:\D{\vv}\frac{(\thet-\tmax)_+}{\thet} \varphi}\le \intTO{\S:\D{\vv}}\le \frac{\|\vv_0\|_2^2}{2}.
\end{equation}
Thus, estimating the terms in \eqref{entropy-limit2.5} as suggested and passing to the limit as $k\to+\infty$, \eqref{entropy-limit2.5} and \eqref{lim:ak} imply that for $h>0$, $\tau\in(h,T-h)$ and $\varphi$ of the form
$$
\varphi(t)=\left\{
\begin{aligned}&\frac{t}{h} &&t\le h,\\
&1 &&t\in (h,\tau),\\
&\frac{\tau+h-t}{h} &&t\in (\tau, \tau+h),\\
&0 &&t\ge \tau+h
\end{aligned}
\right.
$$
we have
\begin{equation*}
\begin{split}
&\fint_{\tau}^{\tau+h}\intO{\left((\thet - \tmax)_+ - \tmax (\ln \thet-\ln \tmax)_+\right)}\dt \\
&\quad \le  \frac{\|\vv_0\|_2^2}{2}+\fint_0^h\intO{\left((\thet - \tmax)_+ - \tmax (\ln \thet-\ln \tmax)_+\right)}\dt.
\end{split}
\end{equation*}
Thanks to the continuity of $\thet$ in $L^1(\Omega)$, see \eqref{MBap}, we can let $h\to 0_+$ to get
\begin{equation}\label{entropy-limit2.6}
\begin{split}
&\frac12\intO{(\thet(\tau) - 2\tmax)_+}\le \intO{\left((\thet(\tau) - \tmax)_+ - \tmax (\ln \thet (\tau)-\ln \tmax)_+\right)} \le  \frac{\|\vv_0\|_2^2}{2}+\|\thet_0\|_1,
\end{split}
\end{equation}
where we used an algebraic inequality $(\thet(\tau) - 2\tmax)_{+} /2\le (\thet(\tau) - \tmax)_+ - \tmax (\ln \thet (\tau)-\ln \tmax)_+$. Finally, since $T>0$ was  arbitrary, it follows from \eqref{entropy-limit2.6} that for all $\tau>0$
\begin{equation}\label{ap-theta}
\begin{split}
&\|\thet(\tau)\|_1\le \|\vv_0\|_2^2+2\|\thet_0\|_1+ 2\tmax|\Omega|.
\end{split}
\end{equation}

To end this section, we can repeat the whole procedure with the function
$$
u:= \mathcal{T}_k(\exp{\eta_h})\frac{(\exp{\eta_h}-\tmin)_-}{\exp{\eta_h}}
$$
(here we denoted $a_-:= \min\{a, 0 \}$). By the very same arguments, with the exception that the term corresponding to \eqref{entropy-limit2.55} with $\varphi\geq0$ is non positive, and since we assume that $\thet_0\ge \tmin$, we get for almost all $(t,x)\in (0,+\infty)\times \Omega$ that
\begin{equation}\label{ap-theta2}
\begin{split}
\thet(t,x)\ge \tmin.
\end{split}
\end{equation}

%
%

\subsection{Renormalization of the entropy equality}\label{S3.4}

We continue with reading information from \eqref{entropy-limit2} but contrary to the preceding section, where we focused only on large values of $\thet$ (to avoid the problem with the boundary data), we introduce here a very specific test function $u$. For that purpose, we fix now arbitrary $\al\in(0,1)$, $T, h>0$ and $k\in \mathbb{N}$ sufficiently large and set
$$
u:=\mathcal{T}_k(\exp{\eta_h})\left[1- \left(\frac{\G(\thetah)}{\G(\mathcal{T}_k(\exp{\eta_h}))}\right)^{\alpha}\right]
$$
in \eqref{entropy-limit2}. Note that for $k>\tmax$, we have $u\in L^{\infty}(Q)\cap L^2(0,T; W^{1,2}_0(\Omega))$ and hence it is an admissible test function.
We also fix a nonnegative $\varphi\in W^{1,\infty}(0,T)$ with $\supp\varphi\in(0,T)$ and assume that $h>0$ is so small that $\supp\varphi\subset(h,T-h)$.

Our goal is to analyze the limit of \eqref{entropy-limit2} as $h\to 0_+$ and then as $k\to +\infty$ to deduce a renormalized version of the entropy equality that is suitable to prove the decay in time of the temperature.

We investigate each term in \eqref{entropy-limit2}. Using the definition of $\mathcal{F}_k$ and $\mathcal{F}_k^{\al}$, see Section~\ref{sec:auxfnc}, the time derivative term can be written as
\begin{equation*}\begin{split}
&\intTO{\dert\eta_h \, \mathcal{T}_k(\exp{\eta_h})\left[1- \left(\frac{\G(\thetah)}{\G(\mathcal{T}_k(\exp{\eta_h}))}\right)^{\al}\right] \varphi } \\&= \intTO{\dert\left( \mathcal{F}_k(\exp{\eta_h}) - \mathcal{F}_k^\al(\exp{\eta_h})(\G(\thetah))^{\al}\right)\varphi}\\
&= -\intTO{\left( \mathcal{F}_k(\exp{\eta_h}) - \mathcal{F}_k^\al(\exp{\eta_h})(\G(\thetah))^{\al}\right)\dert\varphi},
\end{split}\end{equation*}
where we used the integration by parts and the fact that $\varphi$ is compactly supported and also the fact that $\thetah$ is independent of time. Next, we want to let $h\to 0_+$ and also $k \to \infty$. First, thanks to the Jensen inequality, we have  $\exp \eta_h\leq \thet_h$. As $\thet\in L^{1}(Q)$ then $\thet_h \to \thet$ in $L^1(Q)$ and consequently also $\exp \eta_h \to \exp \eta =\thet$ in $L^1(Q)$. Consequently, as
\begin{equation}\label{est:forleb}
|\mathcal{F}_k(s)|+|\mathcal{F}_k^{\al}(s)| \le C(1+s)\quad\mbox{for $s>0$,}
\end{equation}
we can use the Lebesgue dominated convergence theorem to deduce that (recall also that $\thetah$ is bounded)
\begin{equation*}
\begin{split}
\lim_{h\to 0^+} &\left[-\intTO{\left( \mathcal{F}_k(\exp{\eta_h}) - \mathcal{F}_k^\al(\exp{\eta_h})(\G(\thetah))^{\al}\right)\dert\varphi}\right]\\
& = - \intTO{\left( \mathcal{F}_k(\thet) - \mathcal{F}_k^\al(\thet)(\G(\thetah))^{\al}\right)\dert\varphi}.
\end{split}
\end{equation*}
As \eqref{est:forleb} is actually for fixed $\alpha\in(0,1)$ uniform in $k\in\mathbb{N}$ we can use \eqref{Fmk} and  
the Lebesgue dominated convergence theorem to get
\begin{equation*}\begin{split}
&\lim_{k\to +\infty} \left[-\intTO{\left( \mathcal{F}_k(\thet) - \mathcal{F}_k^\al(\thet)(\G(\thetah))^{\al}\right)\dert\varphi}\right]\\
& = - \intTO{[\thet -1 - \mathcal{H}^\al(\thet)(\G(\thetah))^{\al}]\, \dert\varphi}.
\end{split}\end{equation*}
Since, $\thetah$ is independent of time the above relation can be rewritten as
$$
- \intTO{[\thet -\thetah - (\mathcal{H}^\al(\thet) - \mathcal{H}^\al(\thetah))(\G(\thetah))^{\al}]\, \dert\varphi}.
$$
We write the last integrand in this form, because it has nice properties described in Section~\ref{S3.1}.

Next, we focus on the term with $\S:\D \vv$. First, we consider the limit $h\to 0_+$ of
$$
\intTO{\left(\frac{1}{\thet}\,\S:\D{\vv}\right)_h\, \mathcal{T}_k(\exp{\eta_h})\left[1- \left(\frac{\G(\thetah)}{\G(\mathcal{T}_k(\exp{\eta_h}))}\right)^{\al}\right] \varphi}.
$$
Repeating the arguments for $\exp \eta_h$ already performed, also using that $\frac{1}{\thet}\,\S:\D{\vv}\in L^1((0,T)\times\Omega)$, we are in position to employ the dominated convergence theorem and take the limit under the integral sign, then
\begin{equation}\label{SD}\begin{split}
 &\intTO{\frac{1}{\thet}\,\S:\D{\vv}\, \mathcal{T}_k(\thet)\left[1- \left(\frac{\G(\thetah)}{\G(\mathcal{T}_k(\thet))}\right)^{\al}\right] \varphi}
\\& =\intTO{\chi_{\{\thet\leq k\}} \S:\D{\vv}\, \left[1- \left(\frac{\G(\thetah)}{\G(\thet)}\right)^{\al}\right] \varphi}\\&  + \intTO{\chi_{\{\thet> k\}}\frac{k}{\thet}\,\S:\D{\vv}\, \left[1- \left(\frac{\G(\thetah)}{\G(k)}\right)^{\al}\right] \varphi}.
\end{split} \end{equation}
Using that $\|\thetah\|_\infty\leq k$ for $k$ sufficiently large, it holds that
\begin{equation*}\begin{split}
&\left|\intTO{\chi_{\{\thet> k\}}\frac{k}{\thet}\,\S:\D{\vv}\, \left[1- \left(\frac{\G(\thetah)}{\G(k)}\right)^{\al}\right] \varphi}\right| \\
& \leq \intTO{\chi_{\{\thet> k\}}\S:\D{\vv}\, \left[1+ \left|\frac{\G(\thetah)}{\G(k)}\right|^{\al}\right] \varphi}
\leq
C(\overline{\kappa}, \underline{\kappa})\intTO{\chi_{\{\thet> k\}} \S:\D{\vv}\, \| \varphi\|_\infty}
\end{split} \end{equation*}
and  employing also \eqref{st1} it converges to zero when $k\to +\infty$. Thus taking the limit as $k\to +\infty$ in \eqref{SD} we get
$$
\intTO{  \S:\D{\vv}\, \left[1- \left(\frac{\G(\thetah)}{\G(\thet)}\right)^{\al}\right] \varphi}.
$$

Next, we focus on terms involving also $\nabla \thet$. For that purpose, we first observe that thanks to \eqref{limit-k} and the fact that $\nabla\eta\in L^2((0,T)\times\Omega)$, see \eqref{MBap}, we also have that for arbitrary $\varepsilon\in (0,1)$ (recall that $\eta=\ln \thet$)
$$
\int_0^{\infty} k^{-1-\varepsilon} \left(\intTO{\chi_{\{\thet>k\}}\frac{k|\nabla \thet|^2}{\thet^2}}\right) {\,{\rm d} {k}} <\infty.
$$
Consequently, using the Fubini theorem, we see that
$$
\begin{aligned}
\infty&>\int_0^{\infty} k^{-1-\varepsilon} \left(\intTO{\chi_{\{\thet>k\}}\frac{k|\nabla \thet|^2}{\thet^2} }\right) {\,{\rm d} {k}}\\
&=\intTO{\int_0^{\infty} \chi_{\{\thet>k\}}\frac{|\nabla \thet|^2}{k^{\varepsilon}\thet^2}  {\,{\rm d} {k}}}=\intTO{\int_0^{\thet} \frac{|\nabla \thet|^2}{k^{\varepsilon}\thet^2}  {\,{\rm d} {k}}}\\
&=\frac{1}{1-\varepsilon}\intTO{ \frac{|\nabla \thet|^2}{\thet^{1+\varepsilon}}  }
\end{aligned}
$$
Hence, using also \eqref{ap-theta2} we obtain for any $\varepsilon>0$
\begin{equation}\label{nabla}
\intTO{\frac{|\nabla \thet|^2}{\thet^{1+\varepsilon}}}\le C(T,\varepsilon). 
\end{equation}

Having such bound, we can focus on the terms with gradient of the temperature. We start with the one giving us the positive information, namely we focus on the elliptic term together with the second addend on the right-hand side of \eqref{entropy-limit2}, i.e., the term
\begin{equation*}
\begin{split}
&\intTO{(\kappa(\thet)\nabla\eta)_h \cdot\nabla \left[ \mathcal{T}_k(\exp{\eta_h})\left(1- \left(\frac{\G(\thetah)}{\G(\mathcal{T}_k(\exp{\eta_h}))}\right)^{\al}\right) \right]\varphi } \\
&- \intTO{\left(\kappa(\thet)\frac{|\nabla\thet|^2}{\thet^2}\right)_h\, \mathcal{T}_k(\exp{\eta_h})\left(1- \left(\frac{\G(\thetah)}{\G(\mathcal{T}_k(\exp{\eta_h}))}\right)^{\al}\right) \varphi }.
\end{split}
\end{equation*}
Since $\kappa(\thet)$ is bounded, $\nabla\eta\in L^2((0, T)\times\Omega)$, $\mathcal{T}_k(\exp{\eta_h})$ is uniformly bounded in $h>0$ and $\eta_h$ converges almost everywhere to $\eta$, we can take the limit as $h\to 0_+$ under the integral sign. After some manipulations we get
\begin{equation*}
\begin{split}
&\intTO{\kappa(\thet)\nabla \thet\cdot \left[\frac{\nabla (\mathcal{T}_k(\thet))}{\thet} -\frac{\mathcal{T}_k(\thet)\nabla\thet}{\thet^2}\right] \left[1- \left(\frac{\G(\thetah)}{\G(\mathcal{T}_k(\thet))}\right)^{\al}\right] \varphi} \\
&-\al\intTO{\nabla \G(\thet) \cdot \nabla\left[ \frac{\G(\thetah)}{\G(\mathcal{T}_k(\thet))}\right]\left( \frac{\G(\thetah)}{\G(\mathcal{T}_k(\thet))}\right)^{\al-1}\frac{\mathcal{T}_k(\thet)}{\thet}\,\varphi}=:A+B.
\end{split}
\end{equation*}
Employing that $\mathcal{T}_k(\thet)=\thet$ when $\thet\leq k$ and the boundedness of $\thetah$, then by virtue of assumption~\eqref{limit-k} it follows that
\begin{equation}
  |A|\leq  
  C \|\varphi\|_\infty \, \intTO{\chi_{\{\thet>k\}} \frac{|\nabla\thet|^2}{\thet^2}\, k } \to 0 \mbox{ as } k\to +\infty.
\end{equation}
The term $B$ can be split in the following way
\begin{equation}\label{B}
\begin{split}
B= -\al& \intTO{\chi_{\{\thet\leq k\}} \nabla \G(\thet) \cdot \nabla\left[ \frac{\G(\thetah)}{\G(\mathcal{T}_k(\thet))}\right]\left( \frac{\G(\thetah)}{\G(\mathcal{T}_k(\thet))}\right)^{\al-1}\,\varphi} \\
 &-\al\intTO{\chi_{\{\thet>k\}}{\nabla \G(\thet) \cdot \nabla\left[ \frac{\G(\thetah)}{\G(k)}\right]\left( \frac{\G(\thetah)}{\G(k)}\right)^{\al-1}\frac{k}{\thet}\,\varphi }}
\end{split}
\end{equation}
The second integral vanishes as $k\to +\infty$, indeed by a simple manipulation and the H\"{o}lder inequality we get that
\begin{equation*}
\begin{split}
&\left |\intTO{\chi_{\{\thet>k\}}{\frac{1}{\thet^\al}\nabla \G(\thet) \cdot \nabla \G(\thetah)\left( \frac{\G(k)}{\thet}\right)^{1-\al} \frac{1}{\G(\thetah)^{1-\al}}\,\frac{k}{\G(k)} \,\varphi }}\right| \\
&\leq C(\overline{\kappa}, \underline{\kappa}, \|\varphi\|_\infty)\left( \intTO{\chi_{\{\thet>k\}} \frac{|\nabla\thet|^2}{\thet^{2\al}}}\right)^{\frac{1}{2}} \left( \intTO{\chi_{\{\thet>k\}}\frac{ |\nabla\thetah|^2}{\G(\thetah)^{2(1-\al)}}}\right)^{\frac{1}{2}} \\
\end{split}
\end{equation*}
and, by virtue of the summability of $|\nabla\thet|^2/\thet^{2\al}$ for $\al\in(1/2, 1)$, see \eqref{nabla}, and of $|\nabla\thetah|^2$ we get the convergence to zero as $k\to +\infty$.
It remains to discuss the first integral term in \eqref{B}, which can be reformulated onto the whole domain $(0, T)\times\Omega$ because $\nabla \G(\mathcal{T}_k(\thet))=0$ when $\thet\geq k$. Through some manipulations and the integration by parts one gets
\begin{equation}\label{ell}
\begin{split}
-\al& \intTO{\chi_{\{\thet\leq k\}} \nabla \G(\thet) \cdot \nabla\left[ \frac{\G(\thetah)}{\G(\mathcal{T}_k(\thet))}\right]\left( \frac{\G(\thetah)}{\G(\mathcal{T}_k(\thet))}\right)^{\al-1}\,\varphi}\\
&=\al \intTO{ \nabla \G(\mathcal{T}_k(\thet)) \cdot \nabla\left[ \frac{\G(\mathcal{T}_k(\thet))}{\G(\thetah)}\right]\left( \frac{\G(\mathcal{T}_k(\thet))}{\G(\thetah)}\right)^{-1-\al}\,\varphi}\\
&=\al\intTO{ \G(\thetah) \,\nabla\left(\frac{\G(\mathcal{T}_k(\thet))}{\G(\thetah)}\right) \cdot \nabla\left( \frac{\G(\mathcal{T}_k(\thet))}{\G(\thetah)}\right)\left( \frac{\G(\mathcal{T}_k(\thet))}{\G(\thetah)}\right)^{-1-\al}\,\varphi}\\
&\quad + \al \intTO{  \nabla \G(\thetah) \cdot \nabla\left( \frac{\G(\mathcal{T}_k(\thet))}{\G(\thetah)}\right)\left( \frac{\G(\mathcal{T}_k(\thet))}{\G(\thetah)}\right)^{-\al}\,\varphi}\\
&=\frac{4\al}{(1-\al)^2}\intTO{ \G(\thetah) \left|\nabla\left(\frac{\G(\mathcal{T}_k(\thet))}{\G(\thetah)}\right)^{\frac{1-\al}{2}}\right|^2 \,\varphi}\\
&\quad + \frac{\al}{1-\al} \intTO{  \nabla \G(\thetah) \cdot \nabla\left(\left( \frac{\G(\mathcal{T}_k(\thet))}{\G(\thetah)}\right)^{1-\alpha}-1\right)\,\varphi}.
\end{split}
\end{equation}
Now, let us observe that the last integral vanishes because $\thetah$ is solution to \eqref{thetahdf} and the function $(( \G(\mathcal{T}_k(\thet))/\G(\thetah))^{1-\alpha}-1)$ belongs to $W^{1,2}_0(\Omega)$ for almost all $t\in (0,T)$ provided that $k\ge \tmax$.
%
Finally, in the first integral on the right hand side of~\eqref{ell} thanks to the almost everywhere  pointwise convergence  of $\mathcal{T}_k(\thet)$ to $\thet$ and by the nonnegativity of $\varphi$, and consequently of the whole integrand, we can employ the Fatou lemma and get that
\begin{equation*}
\begin{split}
&\frac{4\al}{(1-\al)^2}\intTO{ \G(\thetah) \left|\nabla\left(\frac{\G(\thet)}{\G(\thetah)}\right)^{\frac{1-\al}{2}}\right|^2 \,\varphi}\\
&\quad \leq \liminf_{k\to +\infty} \frac{4\al}{(1-\al)^2}\intTO{ \G(\thetah) \left|\nabla\left(\frac{\G(\mathcal{T}_k(\thet))}{\G(\thetah)}\right)^{\frac{1-\al}{2}}\right|^2 \,\varphi}.
\end{split}
\end{equation*}

Finally, we focus on the convective term
$$
\intTO{(\nabla\eta\cdot\vv)_h  \mathcal{T}_k(\exp{\eta_h})\left[1- \left(\frac{\G(\thetah)}{\G(\mathcal{T}_k(\exp{\eta_h}))}\right)^{\al}\right] \varphi }.
$$
Since $\nabla\eta\cdot\vv\in L^q((0, T)\times\Omega)$ for some $q>1$, it holds that $(\nabla\eta\cdot\vv)_h$ converges to $\nabla\eta\cdot\vv$ in $L^q((0, T)\times\Omega)$ with $q>1$; moreover $\eta_h$ converges to $\eta$ in any $L^q$ with $q\geq 1$ and $ \mathcal{T}_k(\exp{\eta_h})$ is uniformly bounded in $h>0$. Thus, the Lebesgue dominated convergence theorem ensures that
\begin{equation*}
\begin{split}
&\lim_{h\to 0^+} \intTO{(\nabla\eta\cdot\vv)_h\,  \mathcal{T}_k(\exp{\eta_h})\left[1- \left(\frac{\G(\thetah)}{\G(\mathcal{T}_k(\exp{\eta_h}))}\right)^{\al}\right] \varphi }\\
&=  \intTO{\nabla\eta\cdot\vv \, \mathcal{T}_k(\thet)\left[1- \left(\frac{\G(\thetah)}{\G(\mathcal{T}_k(\thet))}\right)^{\al}\right] \varphi }.
\end{split}
\end{equation*}
Next, we use the integration by parts and the fact that $\vv$ is divergence-free to observe
\begin{equation}\label{ct}
\begin{split}
 \intTO{&\nabla\eta\cdot\vv \, \mathcal{T}_k(\thet)\left[1- \left(\frac{\G(\thetah)}{\G(\mathcal{T}_k(\thet))}\right)^{\al}\right] \varphi }\\
 &=-\intTO{\nabla \thet \cdot\vv \, \frac{\mathcal{T}_k(\thet)}{\thet} \left(\frac{\G(\thetah)}{\G(\mathcal{T}_k(\thet))}\right)^{\al} \varphi }\\
&=-\intTO{\chi_{\{\thet>k\}}\vv \cdot \frac{k \nabla \thet}{\thet} \left(\frac{\G(\thetah)}{\G(\mathcal{T}_k(\thet))}\right)^{\al} \varphi} \\
&\quad +\intTO{\mathcal{T}_k(\thet)\,\vv  \cdot \nabla\left[\left(\frac{\G(\mathcal{T}_k(\thet))}{\G(\thetah)}\right)^{-\al}\right] \varphi }.
\end{split}
\end{equation}
For the first term, we use the assumption \eqref{limit-k}, the H\"{o}lder inequality and the estimate \eqref{bound-G} to deduce
\begin{equation*}
\begin{split}
&\left|\intTO{\chi_{\{\thet>k\}}\vv \cdot \frac{k \nabla \thet}{\thet} \left(\frac{\G(\thetah)}{\G(\mathcal{T}_k(\thet))}\right)^{\al} \varphi }\right| \\
&\qquad \le C(\overline{\kappa}, \underline{\kappa},\tmax,\tmin,\varphi)\|\vv\|_2 k^{\frac12 - \alpha}\left(\intTO{\chi_{\{\thet>k\}}\frac{k |\nabla \thet|^2}{\thet^2}}
\right)^{\frac{1}{2}} \to 0 \quad \textrm{ as } k\to\infty,
\end{split}
\end{equation*}
provided, we chose $\al\geq\frac12$. For the second term on the right hand side, we use the Young inequality, \eqref{bound-G} and properties of $\thetah$ to conclude
\begin{equation}\label{ct2}
\begin{split}
&\left|\intTO{\mathcal{T}_k(\thet)\,\vv  \cdot \nabla\left[\left(\frac{\G(\mathcal{T}_k(\thet))}{\G(\thetah)}\right)^{-\al}\right] \varphi }\right|\\
&\quad \le \frac{\alpha}{2}\intTO{\mathcal{G}(\thetah)\left(\frac{\G(\mathcal{T}_k(\thet))}{\G(\thetah)}\right)^{-1-\al}\left|\nabla\frac{\G(\mathcal{T}_k(\thet))}{\G(\thetah)}
\right|^2 \varphi }\\
&\qquad +\frac{\alpha}{2}\intTO{\frac{(\mathcal{T}_k(\thet))^2}{(\G(\mathcal{T}_k(\thet)))^{1+\al}}|\vv|^2 (\G(\thetah))^{\al}\varphi }.
\end{split}
\end{equation}
Please observe here that the first term on the right hand side will be absorbed by the first term on the right hand side of \eqref{ell}. For the second term, we use \eqref{bound-G}, the uniform bound \eqref{ap-theta}  as well as the Korn inequality to get
\begin{equation}
\begin{aligned}
&\frac{\alpha}{2}\intTO{\frac{(\mathcal{T}_k(\thet))^2}{(\G(\mathcal{T}_k(\thet)))^{1+\al}}|\vv|^2 (\G(\thetah))^{\al}\varphi}\\
&\le \frac{\alpha \overline{\kappa}^{\al}\tmax^{\al}}{2\underline{\kappa}^{1+\al}}\intTO{\thet^{1-\al}|\vv|^2 \varphi }\le \frac{\alpha \overline{\kappa}^{\al}\tmax^{\al}}{2\underline{\kappa}^{1+\al}}\int_0^T \|\thet\|_1^{1-\al} \varphi \left(\intO{|\vv|^{\frac{2}{\al}} }\right)^{\al}\, \dt\\
&\le \frac{C(\Omega)\alpha \overline{\kappa}^{\al}\tmax^{\al}}{2\underline{\kappa}^{1+\al}}(\|\vv\|_0^2+2\|\thet_0\|_1+4\overline{\kappa}|\Omega|)^{1-\al}\int_0^T \|\D \vv\|_2^{2} \varphi\, \dt,
\end{aligned}\label{nevim}
\end{equation}
provided that $\al\in [1/3,1)$.


Collecting all terms, using \eqref{MBBounds} for the last term in \eqref{nevim} and the fact that any $\varphi\in W^{1,\infty}_0(0,T)$ is a pointwise limit of $\varphi_n\in W^{1,\infty}(0,T)$ such that $\supp\varphi_n\in(0,T)$ and $\|\varphi_n\|_{1,\infty}\leq 2\|\varphi\|_{1,\infty}$, we obtain
\begin{equation}\label{rn}
\begin{split}
& - \intTO{[\thet -\thetah - (\mathcal{H}^\al(\thet) - \mathcal{H}^\al(\thetah))\G(\thetah)^{\al}]\, \dert\varphi}\\
 &+  \frac{\al}{2} \intTO{\left | \nabla \frac{\G(\thet)}{\G(\thetah)} \right|^2  \G(\thetah) \left( \frac{\G(\thet)}{\G(\thetah)}\right)^{-1-\al}\,\varphi}\\& \leq \intTO{  \S:\D{\vv}\, \left[1- \left(\frac{\G(\thetah)}{\G(\thet)}\right)^{\al}\right] \varphi} \\
 &\qquad +\frac{C(\Omega)\alpha \overline{\kappa}^{\al}\tmax^{\al}}{2\underline{\kappa}^{2+\alpha}}(\|\vv_0\|_2^2+2\|\thet_0\|_1+4\overline{\kappa}|\Omega|)^{1-\al}\intTO{\S:\D \vv\,\varphi}
\end{split}
\end{equation}
for every nonnegative $\varphi\in W^{1,\infty}_0(0, T)$ and for all $\al \in (\frac12, 1)$. Recall that the bound for $\al$ is given by the limit passage in the convective term, see \eqref{ct}, and in the elliptic term, see below \eqref{B}.

We finish this part by the proper choice of the function $\varphi$ that finally gives the desired decay estimates. For $\tau,\sigma\in[0,T]$, $\tau>\sigma$ and a parameter $l\geq 2/(\tau-\sigma)$ let us introduce the function defined as
\begin{equation}
  \label{phiL}\varphi_l(t)=\begin{cases}
0 &\mbox{ if } 0\leq t\leq\sigma,\\
l(t-\sigma) &\mbox{ if } \sigma\leq t\leq\sigma+\frac{1}{l},\\
1 &\mbox{ if } \sigma+\frac{1}{l}\leq t\leq \tau-\frac{1}{l},\\
l(\tau - t) &\mbox{ if }\tau-\frac{1}{l}\leq t\leq \tau\\
0 &\mbox{ if }\tau\leq t\leq T.
\end{cases}
\end{equation}
We also fix a suitable $\lambda>0$. The proper choice of $\lambda$ is done later. Then, we set $\varphi(t):=\exp{\{\lambda t\}}\, \varphi_l(t)$ in \eqref{rn}. The time derivative term is equal to
\begin{equation}\label{time}
\begin{split}
 &- \frac{1}{l}\int_\sigma^{\sigma+\frac{1}{l}}\intO{[\thet -\thetah - (\mathcal{H}^\al(\thet) - \mathcal{H}^\al(\thetah))(\G(\thetah))^{\al}]\exp{\{\lambda t\}}}\dt\\
 &+  \frac{1}{l}\int_{\tau-\frac{1}{l}}^\tau\intO{[\thet -\thetah - (\mathcal{H}^\al(\thet) - \mathcal{H}^\al(\thetah))(G(\thetah))^{\al}]\exp{\{\lambda t\}}}\dt\\
 & - \lambda\intTO{[\thet -\thetah - (\mathcal{H}^\al(\thet) - \mathcal{H}^\al(\thetah))(\G(\thetah))^{\al}]\, \exp{\{\lambda t\}}\,\varphi_l}.
\end{split}
\end{equation}
Since, $\thet\in C((0, T); L^1(\Omega))$, we have  $\mathcal{H}^\al(\thet)\in C((0, T); L^1(\Omega))$ as well, and consequently the first two addends in \eqref{time} converge to
$$
\begin{aligned}
&\intO{\left[\thet(\tau) -\thetah - (\mathcal{H}^\al(\thet(\tau)) - \mathcal{H}^\al(\thetah))(\G(\thetah))^{\al}\right]\exp{\{\lambda \tau\}}} \\
&\qquad - \intO{[\thet(\sigma) -\thetah - (\mathcal{H}^\al(\thet(\sigma)) - \mathcal{H}^\al(\thetah))(\G(\thetah))^{\al}]\exp\{\lambda\sigma\}}
\end{aligned}
$$
when $l\to +\infty$. The convergence in the third term in \eqref{time} as well as the convergence in all remaining terms in \eqref{rn} directly follows from the facts that $\varphi_l\leq 1$ and converges point-wisely to $\chi_{(\sigma,\tau)}$ for $l\to +\infty$, $\S:\D\vv\in L^1((0, T)\times\Omega)$ and $1- ({\G(\thetah)}/{\G(\thet)})^{\al}$ is bounded, and the Lebesgue dominated convergence theorem. Since $T>0$ was arbitrary, the final inequality has for any $\tau>\sigma\geq0$ the form
\begin{equation}\label{rn2}
\begin{split}
& \intO{\left[\thet(\tau) -\thetah - (\mathcal{H}^\al(\thet(\tau)) - \mathcal{H}^\al(\thetah))(\G(\thetah))^{\al}\right]\exp{\{\lambda \tau\}}}\\ &-\lambda \int_\sigma^{\tau} \intO{[\thet -\thetah - (\mathcal{H}^\al(\thet) - \mathcal{H}^\al(\thetah))(\G(\thetah))^{\al}]\, \exp{\{\lambda t\}}}\, \dt\\
&+  \frac{\al}{2} \int_\sigma^{\tau}\intO{\left| \nabla \frac{\G(\thet)}{\G(\thetah)} \right|^2  \G(\thetah) \left(\frac{\G(\thet)}{\G(\thetah)}\right)^{-1-\al}\exp{\{\lambda t\}}}\, \dt \\
&
\leq \int_\sigma^{\tau}\intO{  \S:\D{\vv}\, \left[1- \left(\frac{\G(\thetah)}{\G(\thet)}\right)^{\al}\right] \exp{\{\lambda t\}}}\, \dt \\
&\qquad +\frac{C(\Omega)\alpha \overline{\kappa}^{\al}\tmax^{\al}}{2\underline{\kappa}^{2+\alpha}}(\|\vv_0\|_2^2+2\|\thet_0\|_1+4\overline{\kappa}|\Omega|)^{1-\al}\int_\sigma^{\tau}  \intO{\S:\D \vv}\exp{\{\lambda t\}}\, \dt\\
&\qquad +\intO{[\thet(\sigma) -\thetah - (\mathcal{H}^\al(\thet(\sigma)) - \mathcal{H}^\al(\thetah))(\G(\thetah))^{\al}]\exp{\{\lambda\sigma\}}},
\end{split}
\end{equation}
with $\alpha\in(1/2,1)$ and $\lambda >0$. It is the starting point for the further investigation.

\subsection{Exponential decay and stability - proof of the main result}\label{S3.6}

The statement~\eqref{eq:twocrosses} was proved in Section~\ref{S3.2}. To prove~\eqref{assymptotictwo} we start with the estimate for~$f$ and $\overline{h}$ (see \eqref{definice}), which is based on the Sobolev embedding. Notice that $\overline{h}(\thet,\thetah)=0$ on $\partial\Omega$. Using Lemma~\ref{LM2}, the fact that $\thetah$ is bounded,  the minimum principle \eqref{ap-theta2}, the H\"{o}lder and the Poincar\'{e} inequalities and the a~priori estimate \eqref{ap-theta}, we deduce that for $\alpha \in [1/3,2/3]$, there holds
\begin{equation}\label{Holder}
\begin{split}
&\intO{f(\thet,\thetah)}\le C(\al,\underline{\kappa},\overline{\kappa})\overline{\thet}^{1-\al}\left(\frac{
\overline{\thet}}{\underline{\thet}}\right)^{\al}\intO{\thet^{\al}|\overline{h}(\thet,\thetah)|^2}\\
&\leq C(\al,\underline{\kappa},\overline{\kappa})\overline{\thet}^{1-\al}\left(\frac{
\overline{\thet}}{\underline{\thet}}\right)^{\al} \|\thet\|_{1}^\al \|\overline{h}(\thet,\thetah)\|_{\frac{2}{1-\al}}^2\\
&\leq C(\Omega,\al,\underline{\kappa},\overline{\kappa})\overline{\thet}^{1-\al}\left(\frac{
\overline{\thet}}{\underline{\thet}}\right)^{\al} (2\|\thet_0\|_{1}+\|\vv_0\|_2^2 + 2\overline{\thet}|\Omega|)^\al \|\nabla \overline{h}(\thet,\thetah)\|_{2}^2\\
&\leq K^{-1} \frac{\al}{2} \intO{\left| \nabla \frac{\G(\thet)}{\G(\thetah)} \right|^2  \G(\thetah) \left(\frac{\G(\thet)}{\G(\thetah)}\right)^{-1-\al}},
\end{split}
\end{equation}
where we defined
\begin{equation}\label{rn28}
K:= \left[C(\Omega,\al,\underline{\kappa},\overline{\kappa})\overline{\thet}^{1-\al}\left(\frac{
\overline{\thet}}{\underline{\thet}}\right)^{\al} (2\|\thet_0\|_{1}+\|\vv_0\|_2^2 + 2\overline{\thet}|\Omega|)^\al\frac2{\alpha\G(\underline\thet)}
\right]^{-1}.
\end{equation}
Hence, defining
\begin{equation}\label{rn29}
M:=\frac{C(\Omega)\alpha \overline{\kappa}^{\al}\tmax^{\al}}{2\underline{\kappa}^{2+\al}}(\|\vv_0\|_2^2+2\|\thet_0\|_1+4\overline{\kappa}|\Omega|)^{1-\al}+1,
\end{equation}
and using \eqref{Holder} in \eqref{rn2}, we get for $\alpha\in (1/2,2/3]$
\begin{equation}\label{rn3}
\begin{split}
& \intO{f(\thet(\tau),\thetah)\exp{\{\lambda \tau\}}}+(K-\lambda) \int_\sigma^{\tau} \intO{f(\thet(\tau),\thetah)\, \exp{\{\lambda t\}}}\, \dt\\
&\qquad \leq M\int_\sigma^{\tau}\intO{  \S:\D{\vv} \exp{\{\lambda t\}}}\, \dt  +\intO{f(\thet(\sigma),\thetah)\exp\{\lambda\sigma\}}.
\end{split}
\end{equation}

Note that $K$ and $M$ depend on the size of the data. We use the identity \eqref{st0}, integration by parts and \eqref{premu} to get an estimate
$$
\begin{aligned}
&\int_\sigma^{\tau}\intO{  \S:\D{\vv} \exp{\{\lambda t\}}}\, \dt= - \frac12  \int_\sigma^{\tau} \frac{{\rm d}}{{\rm d}t}  \|\vv\|_2^2  \exp{\{\lambda t\}}\, \dt\\
&=- \frac12\|\vv(\tau)\|_2^2  \exp{\{\lambda \tau\}}+   \frac12\|\vv(\sigma)\|_2^2\exp{\{\lambda \sigma\}} + \frac\lambda2 \int_\sigma^{\tau}  \|\vv\|_2^2  \exp{\{\lambda t\}}\, \dt\\
&\le- \frac12\|\vv(\tau)\|_2^2  \exp{\{\lambda \tau\}}+   \frac12\|\vv(\sigma)\|_2^2\exp{\{\lambda \sigma\}} + \lambda\frac {C(\Omega,\underline\kappa)}2 \int_\sigma^{\tau} \intO{\S:\D{\vv}  \exp{\{\lambda t\}}}\, \dt,
\end{aligned}
$$
that gives with help of definition of $\mu$ below \eqref{muprv}
$$
\left(1-\frac\lambda\mu\right)\int_\sigma^{\tau}\intO{  \S:\D{\vv} \exp{\{\lambda t\}}}\, \dt
\le- \frac12\|\vv(\tau)\|_2^2  \exp{\{\lambda \tau\}}+   \frac12\|\vv(\sigma)\|_2^2\exp{\{\lambda \sigma\}}.
$$

Substituting this estimate with $\lambda\in(0,\mu)$ into \eqref{rn3}, we get
\begin{equation}\label{rn35}
\begin{split}
  & \frac{M\mu}{2(\mu-\lambda)}\|\vv(\tau)\|_2^2  \exp{\{\lambda \tau\}}+\intO{f(\thet(\tau),\thetah)\exp{\{\lambda \tau\}}}\\
  &\qquad+(K-\lambda) \int_\sigma^{\tau} \intO{f(\thet(\tau),\thetah)\, \exp{\{\lambda t\}}}\, \dt\\
&\qquad \leq \frac{M\mu}{2(\mu-\lambda)}\|\vv(\sigma)\|_2^2  \exp{\{\lambda \sigma\}}+\intO{f(\thet(\sigma),\thetah)\exp\{\lambda\sigma\}},
\end{split}
\end{equation}
which is for $\lambda\in(0,\min(K,\mu))$ the statement \eqref{assymptotictwo}.

\cite{BuMaPr19} \cite{Dafermos} \cite{Fe1,Fe2}

\begin{appendix}

\section{Possible generalizations}\label{App:A}
Here, we give just several short notes, how the result can be extended onto more general models. We do not provide complete proofs but we rather present ideas how one can proceed.

\subsection{Nonconstant heat capacity}
In general, the equation \eqref{MB3} has the form
\begin{equation}
\dert e(\thet) + \diver (\vv e(\thet)) - \diver (\kappa(\thet)\nabla \thet) = \S(\thet,\D\vv) : \D \vv, \label{MB333}
\end{equation}
where $e:\mathbb{R}_+ \to \mathbb{R}_+$ is increasing function. In addition if we assume that
$$
\underline{\kappa}\le e'(\thet)\le \overline{\kappa},
$$
then the function $e$ has Lipschitz inverse and the equation \eqref{MB333} can be rewritten into the form for unknown function $\Theta=e(\thet)$ (re-scaled temperature), $\Theta:(0,+\infty)\times\Omega\to(0,+\infty)$ as
\begin{equation}
\dert \Theta + \diver (\vv \Theta) - \diver (\tilde{\kappa}(\Theta)\nabla \Theta) = \tilde\S(\Theta,\D\vv) : \D \vv, \label{MB334}
\end{equation}
where $\tilde\kappa(\Theta)=\kappa(e_{-1}(\Theta))e_{-1}'(\Theta)$ and $\tilde\S(\Theta,\D\vv)=\S(e_{-1}(\Theta),\D\vv)$. Similarly, we neet to replace $\S(\thet,\D\vv)$ by $\tilde\S(\Theta,\D\vv)$ in \eqref{MB1}.
Hence, after the rescaling of the temperaturewe are in the same position as in \eqref{MB1}--\eqref{MB3}, the only change is a different heat conductivity $\tilde{\kappa}$ and an extra stress tensor $\tilde \S$. Note that $\tilde{\kappa}$ and $\tilde \S$ satisfy the assumptions of Theorem~\ref{TMB}.

\subsection{Notes to the case \texorpdfstring{$\delta=0$}{d}}\label{App:B}
First, if $\delta=0$ the procedure in Section~\ref{S3.2} must be modified since $\|\vv\|_2^r\leq C\intO{ \S:\D{\vv}}$ and consequently \eqref{st0} turns into
\begin{equation}\label{st0A}
  \frac{{\rm d}}{{\rm d}t} \frac{\|\vv\|^2_2}{2} + \mu \|\vv\|^r_2 \le 0\mbox{ for all } t\in (0,T)
\end{equation}
with $\mu=1/C(\Omega,\underline\kappa)$.
Assuming that $\|\vv(\tau)\|_2>0$ for all $\tau\in (0,t)$ we can deduce from \eqref{st0A} (recall that $r\ge 2$)
\begin{equation}\label{st0B}
 \frac{{\rm d}}{{\rm d}t} \|\vv\|^{2-r}_2\ge  \mu(r-2)  \mbox{ for all } \tau\in (0,t),
\end{equation}
which implies
\begin{equation}\label{st0C}
\|\vv(t)\|_2\le   \frac{\|\vv_0\|_2}{(1+\mu(r-2)t\|\vv_0\|^{r-2}_2)^{\frac{1}{r-2}}}.
\end{equation}

Next difference arises in estimating the last term on the right hand side of the second line in \eqref{nevim} where one needs to estimate $\|\vv\|_{2/\alpha}^2$ by means of $\|\D{\vv}\|_r^r\leq \intO{\S:\D{\vv}}/\underline\kappa$. Consequently, the resulting inequality \eqref{rn3} is changed to (here we require that $\al\in[2(3-r)/3r,1]$ and $\nu=(3r\alpha-6+2r)/(3r-6+2r)$ is a number arising from the interpolation of $L^{2/\al}$ between $L^2$ and $W^{1,r}$ norm)
\begin{equation}\label{rn3A}
\begin{split}
& \intO{f(\thet(\tau),\thetah)\exp{\{\lambda \tau\}}}+(K-\lambda) \int_0^{\tau} \intO{f(\thet(\tau),\thetah)\, \exp{\{\lambda t\}}}\, \dt\\
&\qquad \leq M\int_0^{\tau}\left(\intO{  \S:\D{\vv}}\right)^{\frac{2-\nu}{r}}\|\vv\|_2^{\nu}\exp{\{\lambda t\}} \, \dt  +\intO{f(\thet_0,\thetah)}.
\end{split}
\end{equation}
Using the H\"{o}lder inequality, the identity \eqref{st1} and also \eqref{st0C}, we have
\begin{equation}\label{rn32A}
\begin{aligned}
\int_0^{\tau}&\left(\intO{  \S:\D{\vv}}\right)^{\frac{2-\nu}{r}}\|\vv\|_2^{\nu}\exp{\{\lambda t\}} \, \dt \\
&\le C\int_0^{\tau}\left(\intO{  \S:\D{\vv}}\right)^{\frac{2-\nu}{r}}\frac{\exp{\{\lambda t\}}}{(1+t)^{\frac{\nu}{r-2}}} \, \dt\\
&\le C\left(\int_0^{\tau}\intO{  \S:\D{\vv}}\, \dt \right)^{\frac{2-\nu}{r}}\left(\int_0^{\tau}\frac{\exp{\{\lambda t\frac{r}{r-2+\nu}\}}}{(1+t)^{\frac{\nu}{r-2}\frac{r}{r-2+\nu}}} \, \dt\right)^{\frac{r-2+\nu}{r}}\\
&\le \tilde{C}(\vv_0)\frac{\exp{\{\lambda \tau\}}}{(1+\tau)^{\frac{\nu}{r-2}}}.
\end{aligned}
\end{equation}
In the last estimate in \eqref{rn32A}, we assumed that $\lambda$ is so large that the function
$${\exp{\{\lambda t\frac{r}{r-2+\nu}\}}}/{(1+t)^{\frac{\nu}{r-2}\frac{r}{r-2+\nu}}}$$
is increasing on $(0,+\infty)$, and
$$
\int_0^{\tau}\frac{\exp{\{\lambda t\frac{r}{r-2+\nu}\}}}{(1+t)^{\frac{\nu}{r-2}\frac{r}{r-2+\nu}}} \, \dt
\leq 2\int_{\frac\tau2}^{\tau}\frac{\exp{\{\lambda t\frac{r}{r-2+\nu}\}}}{(1+t)^{\frac{\nu}{r-2}\frac{r}{r-2+\nu}}} \, \dt\leq \frac{C}{(1+\tau)^{\frac{\nu}{r-2}\frac{r}{r-2+\nu}}}\int_{\frac\tau2}^{\tau}\exp{\{\frac{\lambda t r}{r-2+\nu}\}} \, \dt.
$$
By using \eqref{rn32A} in  \eqref{rn3A} we get the following decay
\begin{equation}\label{rn33A}
\begin{split}
& \intO{f(\thet(\tau),\thetah)}\le \frac{C(\vv_0,\thet_0)}{(1+\tau)^{\frac{\nu}{r-2}}}.
\end{split}
\end{equation}

\end{appendix}

\providecommand{\bysame}{\leavevmode\hbox to3em{\hrulefill}\thinspace}
\providecommand{\MR}{\relax\ifhmode\unskip\space\fi MR }
\providecommand{\MRhref}[2]{%
  \href{http://www.ams.org/mathscinet-getitem?mr=#1}{#2}
}
\providecommand{\href}[2]{#2}


\end{document}